\newtheorem{thm}{Theorem}
\newtheorem{prop}[thm]{Proposition}
\newtheorem{defn}[thm]{Definition}
\newtheorem{cor}[thm]{Corollary}
\newtheorem{example}[thm]{Example}
\newcommand{\R}{\mathbb{R}}
\newcommand{\He}{\mathbb{H}}
\newcommand{\N}{\mathbb{N}}
\newcommand{\G}{\mathbb{G}}
\numberwithin{equation}{section}
\begin{document}
\title{Horizontal Gauss Curvature Flow of Graphs in Carnot Groups}
\author{Erin Haller Martin}
\address{Westminster College\\
Fulton, MO 65251, USA } \email[Erin Haller Martin]{erin.martin@westminster-mo.edu}

\thanks{The results in this paper are part of
the author's Ph.D. Dissertation at the University of Arkansas and were partially funded by NSF grant NSF DMS-0134318.  I
would like to thank my advisor, L. Capogna, for suggesting the
horizontal Gauss curvature flow problem and for his advice, encouragement, and patience
along the way.}


\begin{abstract}
We show the existence of continuous viscosity solutions to the equation describing the flow of a graph in the Carnot group $\mathbb{G} \times \R$ according to its horizontal Gauss curvature. In doing so, we prove a comparison principle for degenerate parabolic equations of the form $u_t + F(D_0u, (D_0^2u)^*) = 0$ for $u$ defined on $\G$.  
\end{abstract}

\maketitle

\section{Introduction}

In the Euclidean setting, there has been extensive study of the evolution of surfaces by their Gauss curvature (see \cite{chow}, \cite{oliker}, \cite{urbas2}, \cite{urbas3}, \cite{andrews}, \cite{es:mc1}, \cite{firey}, \cite{tso}).  For a flow of surfaces parameterized by $x: M^n \times [0,T] \to \mathbb{R}^{n+1}$, such an evolution is described by 
\begin{equation}\label{curvatureflow}
\left\{ \begin{array}{c} \partial_tx(s,t) = -K(s,t)\vec{n}(s,t)\\ x(s,0) = x_0 \end{array}\right.
\end{equation}
where $K$ is the Gauss curvature of the surface $M_t = x(M^n,t)$, $\vec{n}(s,t)$ is the outer unit normal at $x(s,t)$ and $x_0$ describes the initial surface.  In 1974, W. J. Firey \cite{firey} proposed this flow as a model for the changing shape of a tumbling stone.  Assuming some existence and regularity  of solutions, he showed that surfaces which are convex and symmetric about the origin contract to points.  K. Tso \cite{tso} resolved the existence and regularity aspects by showing that $\eqref{curvatureflow}$ has a unique smooth solution for a maximal time interval $[0,T)$ when the initial surface is smooth, closed and uniformly convex.  Building on the results of Tso, B. Andrews \cite{andrews} generalized Firey's result to surfaces which are not necessarily symmetric but simply convex.  During this time, Y.-G. Chen, Y. Giga and S. Goto \cite{CGG}, and independently L. C. Evans and J. Spruck \cite{es:mc1}, developed a new approach to describing the evolution of surfaces which flow according to functions of their principal curvatures.  By considering the surfaces as level sets of a function, they could describe the flow using a scalar partial differential equation instead of the system of PDE which results from the parameterization described above.  Following this level set method, P. Marcati and M. Molinari \cite{marcati} reduced the problem of the Gauss curvature flow to showing the existence, uniqueness, and regularity of so-called viscosity solutions to a degenerate parabolic partial differential equation.  Such notions of solution provide for the existence and description of a solution past singularities which may develop in the surface flow.  

In recent years, viscosity theory has been extended to include solutions to equations defined on more general spaces such as the sub-Riemannian Carnot groups, each of which can be thought of as a limit of Riemannian manifolds (see \cite{bardi}, \cite{manfredi}, \cite{wang:comparison}, \cite{wang:convex} \cite{bieske}, \cite{bieske2}).  Curvature flows in this setting, in particular the mean curvature flow, have even been found to have applications to digital image reconstruction and neuroscience (\cite{cittisarti}, \cite{hp:min}).  
For the extension of the Gauss curvature flow problem to the setting of Carnot groups, the Riemannian Gauss curvature is substituted with the so-called horizontal Gauss curvature which is built by taking into account only the principal curvatures corresponding to horizontal tangent directions (see \cite{dgn3},\cite{cpt}).  In this setting, it is natural to begin with the case when the surface is given by the epigraph of a function $u:\mathbb{G} \to \mathbb{R}$, the evolution which is described by 
\begin{equation}\label{A}
\left\{\begin{array}{c}\partial_t u(p,t) = -K(p,t) \vec{n}_0(p,t) = \frac{\det \left((D_0^2u)^*\right)}{\left(\sqrt{1+|D_0u|^2}\right)^{m_1+1}} \\ u(p,0) = u_0(p)\end{array}\right.
\end{equation}
where $K(p,t)$ is the horizontal Gauss curvature, $D_0u$ is the horizontal gradient of $u$, $(D_0^2u)^*$ is the symmetrized horizontal Hessian of $u$, $\vec{n}_0$ is the normal to the surface projected onto the horizontal space $V^1$, $m_1$ is the dimension of $V^1$, and $u_0$ is used to describe the original surface.  Recall that the theory of viscosity solutions relies on the ellipticity (or coercivity) of $F(D_0u, (D_0^2u)^*) = -\frac{\det \left((D_0^2u)^*\right)}{\left(\sqrt{1+|D_0u|^2}\right)^{m_1+1}}$, i.e. $F(\eta, \mathcal{X}) \leq F(\eta,\mathcal{Y})$ whenever $\mathcal{X},\mathcal{Y} \in S^{m_1}(\R)$, $\mathcal{Y} \leq \mathcal{X}$ and $S^{m_1}(\R)$ denotes the set of all $m_1 \times m_1$ real symmetric matrices.  However, we see immediately that $F(\eta, \mathcal{X}) = -\frac{\det \mathcal{X}}{\left(\sqrt{1+|\eta|^2}\right)^{m_1+1}}$ only satisfies this property if $\mathcal{X}$ is positive definite.  Because of this, we will instead show the existence of continuous viscosity solutions to the following modified problem:
\begin{equation}
\left\{\begin{array}{c}\partial_t u(p,t) = \frac{\text{det}_+ \left((D_0^2u)^*\right)}{\left(\sqrt{1+|D_0u|^2}\right)^{m_1+1}} \\ u(p,0) = u_0(p)\end{array}\right.
\end{equation}
where $\text{det}_+ X = \prod_{i=1}^{m_1} \max\{\lambda_i,0\}$ and $\{\lambda_i\}$ denotes the eigenvalues of $X$.  In the Euclidean setting, it has been shown (see \cite{cei}, \cite{marcati}, \cite{tso}, \cite{andrews:cont}) that a strictly convex surface which flows according to its Gauss curvature remains strictly convex.  Thus, if the initial surface is strictly convex, the modified problem is equivalent to the original Gauss curvature flow problem.  In Section 4, we will show that the same is true for the Carnot group setting when we replace Euclidean convexity with the appropriate notion of convexity for Carnot groups, known as weak H-convexity (see Section 4.2 for details), and add some extra hypotheses concerning $u_0$.  

In recent years, some progress has been made in proving the existence and uniqueness of viscosity solutions to certain degenerate elliptic evolution equations in Carnot groups.  For example, existence and uniqueness results have been shown by L. Capogna and G. Citti \cite{CC} for the mean curvature flow equation.  T. Bieske \cite{bieske2} proved a comparison principle, and via Perron's method, existence and uniqueness results for solutions of degenerate parabolic equations which are defined on bounded domains in the Heisenberg group and satisfy certain uniform continuity conditions.  In \cite{manfredi}, F. Beatrous, T. Bieske, and J. Manfredi proved an analogous comparison principle for degenerate elliptic equations generated by vector fields.  As an exercise, and to more clearly present the main results of this paper, in Section 3 we will first use a combination of these methods to extend the results in \cite{bieske2} and \cite{manfredi} to degenerate parabolic equations on Carnot groups.  However, even these theorems will require the domain on which the sub- and supersolutions are defined to be bounded and $F$ to be admissible, i.e. $F$ must satisfy certain uniform continuity conditions.  We immediately see that because of this, these results cannot be applied to the horizontal Gauss curvature flow equation.  

Recently, C.-Y Wang \cite{wang:comparison} used a careful application of Jensen's maximum principle to the sup/inf convolutions of the sub- and supersolutions to prove a comparison principle for subelliptic equations on Carnot groups which do not necessarily satisfy any uniform continuity conditions.  However, he still required that the sub- and supersolutions be defined on bounded domains.  In the Euclidean setting, H. Ishii and T. Mikami \cite{im:level} used this same idea along with the additional requirement that the sub- and supersolutions possess certain growth conditions at infinity to prove a comparison principle for unbounded domains.  By combining the ideas in these two papers, in Section 3 we will prove a comparison principle that can be applied to sub- and supersolutions of the horizontal Gauss curvature flow equation defined on unbounded domains.  Combining this comparison principle with Perron's method will then yield the desired existence of continuous viscosity solutions.
\section{Carnot Groups}
In this section we introduce Carnot groups and summarize their basic properties.  
\begin{defn}
Let $\mathbb{G}$ be a Lie group and \text{\gothfamily{g}} its corresponding Lie algebra.  $\mathbb{G}$ is a stratified nilpotent Lie group of step $r \geq 1$ if \text{\gothfamily{g}} admits a vector space decomposition in $r$ layers

$$\text{\gothfamily{g}} = V^1 \oplus V^2 \oplus \cdots \oplus V^r$$
having the properties that $[V^1, V^j] = V^{j+1}$, $j=1,\ldots,r-1$ and $[V^j,V^r]=0$, $j=1, \ldots, r$.  
\end{defn}

Let $m_j = \text{dim}(V^j)$ and let $X_{i,j}$ denote a left-invariant basis of $V^j$ where $1 \leq j \leq r$ and $1 \leq i \leq m_j$.  The dimension of $\mathbb{G}$ as a manifold is $m=m_1+m_2+ \cdots +m_r$.  For simplicity, we will often set $X_i = X_{i,1}$.  We call the $\{X_{i}\}$ horizontal vector fields and call their span, denoted $H\mathbb{G}$, the horizontal bundle.  We call the $\{X_{i,j}\}_{2\leq j\leq r}$ vertical vector fields and call their span, denoted $V\mathbb{G}$, the vertical bundle.  Then $T\mathbb{G} = H\mathbb{G} \oplus V\mathbb{G}$.  We also define $n = m_2 + \cdots+ m_r$.

Let $g$ be a Riemannian metric on \text{\gothfamily{g}} with respect to which the $V^j$ are mutually orthogonal.  An absolutely continuous curve $\gamma:[0,1] \to \mathbb{G}$ is horizontal if the tangent vector $\gamma'(t)$ lies in $V^1$ for all $t$.  The Carnot-Carath\'eodory metric is then defined by 

$$d_{cc}(p,q) = \inf\int_0^1 \left(\sum_{i=1}^{m_1} \left \langle \gamma'(t),X_i|_{\gamma(t)}\right \rangle ^2_g \, dt\right)^{1/2},$$
where the infimum is taken over all horizontal curves $\gamma$ such that $\gamma(0) = p$, $\gamma(1) = q$ and $\left \langle \cdot,\cdot \right \rangle_g$ denotes the left invariant inner product on $V^1$ determined by $g$.
\begin{defn}
Let $\mathbb{G}$ be a simply connected Lie group with Lie algebra \text{\gothfamily{g}} and a Carnot Carath\'eodory metric $d_{CC}$ developed as above.  Then the pair $(\mathbb{G}, d_{CC})$ is a Carnot group.
\end{defn}

\textit{Note.}  By a standard abuse of notation, we will refer to $\mathbb{G}$ as the Carnot group, implying its association with $d_{CC}$.
\\

As the exponential map exp:\hspace{3pt}\text{\gothfamily{g}} $\mapsto \mathbb{G}$ is a global diffeomorphism, we can use exponential coordinates on $\G$.  In this way, a point $p \in \mathbb{G}$ has coordinates $p_{i,j}$ for $1 \leq i \leq m_j$, $1 \leq j \leq r$ if 

$$p = \text{exp}\left(\sum_{j=1}^r \sum_{i=1}^{m_j}p_{i,j}X_{i,j}\right).$$
In this setting, the non-isotropic dilations are the group homomorphisms given by 
$$\delta_s \left(\sum_{j=1}^r \sum_{i=1}^{m_j}p_{i,j}X_{i,j}\right)=\sum_{j=1}^r \sum_{i=1}^{m_j}s^j p_{i,j}X_{i,j},$$
where $s > 0$.

Using these coordinates an equivalent distance on $\mathbb{G}$ is the gauge norm given by 
$$|p|_g = \left(\sum_{j=1}^r\left(\sum_{i=1}^{m_j} |p_{i,j}|^2\right)^{\frac{r!}{j}}\right)^{\frac{1}{2r!}}.$$
(Note that typically $|\cdot|_g$ is only a quasinorm rather than a true norm, i.e. the inequality $|p\,q|_g \leq |p|_g|q|_g$ must be replaced with $|p\,q|_g \leq C|p|_g|q|_g$ for some constant $C < \infty$.)
Then we have 
$$d_g(p,q) = |q^{-1}p|_g.$$
Using this distance we define the gauge balls $B(p,r) = \{ x\in \G\hspace{3pt} |\hspace{3pt} d_g(p,x) < r\}$.  We also have $|B(p,r)| = w_Gr^Q$ where $|B(e,1)| = w_G$, $e$ is the group identity and $Q = \sum_{k=1}^r km_k$ is the so-called homogeneous dimension of $\G$ (see \cite{fol:1975}). 
\setcounter{thm}{0}
\begin{example}\label{example0} \textnormal{\textbf{Euclidean Space} $\mathbb{E}^n$}\\
The simplest example of a Carnot group is Euclidean space, $\mathbb{E}^n = (\mathbb{R}^n, |\cdot|\hspace{2pt})$, which is a Carnot group of step 1. 
\end{example}
\begin{example}\label{example1} \textnormal{\textbf{The Heisenberg Group} $\He^n$}\\
The simplest example of a non-abelian Carnot group is the Heisenberg
group, ${\mathbb{H}}^n$, which is a Carnot group of step $2$.  It is
the Lie group with underlying manifold $\mathbb{R}^{2n}\times
\mathbb{R}$ endowed with the non-commutative group law
$$(x,x_{2n+1})(x',x_{2n+1}')=(x+x',x_{2n+1}+x'_{2n+1}+2[x,x'])\,,$$ where $x,x' \in
\mathbb{R}^{2n}$, $x_{2n+1},x_{2n+1}' \in \mathbb{R}$, and
$[x,x']=\sum_{i=1}^n({x'}_i x_{n+i}-x_i{x'}_{n+i})$. The vector
fields $X_{i,1} =
\partial_{x_i} - \frac{1}{2}x_{n+i}\partial_{x_{2n+1}}$, $X_{i+n,1} =
\partial_{x_{n+i}} + \frac{1}{2}x_i \partial_{x_{2n+1}}$, for $i=1, \ldots ,n$ and
$X_{1,2}=\partial_{x_{2n+1}}$ form a left-invariant vector basis for the Lie
algebra of $\mathbb{H}^n$.  Its Lie algebra \text{\gothfamily{h}} can be written as the
vector sum $\text{\gothfamily{h}}=V^1 \oplus V^2$, where $V^1 = \text{Span}\{X_{1,1}, \ldots,
X_{2n,1}\}$ and $V^2 = \text{Span}\{X_{1,2}\}$.
\end{example}

\begin{example}\label{example2} \textnormal{\textbf{H-type Groups}}\\
A Carnot group $\G$ is said to be of Heisenberg type, or of H-type, if the Lie algebra $\text{\gothfamily{g}}$ is of step two with $\text{\gothfamily{g}} = V^1\oplus V^2$ and if there is an inner product $\langle \cdot , \cdot \rangle$ on $\text{\gothfamily{g}}$ such that the linear map $J: V^1 \to \text{End}\,V^2$ defined by the condition 
\begin{equation}\label{htype1}
\left \langle J_z(u),v\right\rangle = \left \langle z, [u,v]\right\rangle
\end{equation}
satisfies 
\begin{equation}\label{htype2}
J_z^2 = -|z|^2\text{Id}
\end{equation}
for all $z \in V^2$.  The following are consequences of $\eqref{htype1}$ and $\eqref{htype2}:$
\begin{equation}\label{htype3}
|J_z(v)| = |z||v|
\end{equation}
\begin{equation}\label{htype4}
\left\langle J_z(v),v\right\rangle = 0.
\end{equation}
Such groups were introduced by Kaplan in \cite{kaplan}.  For more information, we refer the reader to \cite{kaplan} and \cite{HH}.
\end{example}
\setcounter{thm}{2}
We will also need the following definition of spaces of continuous functions.
\begin{defn}
Let $\G$ be a Carnot group.  For $j,k,l \in \N$, $\Omega \subset \G$, and $T > 0$ we let $\mathcal{C}_j^{k,l}(\Omega \times (0,T))$ represent the set of functions $f : \Omega \times [0,T] \to \R$ such that the components of $f$ as well as all of the horizontal derivatives up to order $k$, all of the derivatives along the second layer up to order $j$, and all of the time derivatives up to order $l$ of the components of $f$ are continuous in $\Omega \times (0,T)$.
\end{defn}
We will also use $C^{k,l}$ in the usual way to denote the set of functions $f: \Omega \times [0,T] \to \R$ having the property that the components of $f$ as well as all of the Euclidean spatial derivatives up to order $k$ and all of the time derivatives up to order $l$ are continuous.  

Finally we will need the following notation concerning the derivatives of $u$.  
Given a function $u:\mathbb{G} \times [0,T] \to \R$ we consider the (full) spatial gradient of $u$ given by 

$$D_{\text{\gothfamily{X}}}u= (X_{i,j}u)_{1\leq i \leq m_j,1\leq j \leq r} \in \R^m.$$
As a vector field, this is written as 
$$D_{\text{\gothfamily{X}}}u = \sum_{j=1}^r \sum_{i=1}^{m_j}(X_{i,j}u)X_{i,j}.$$
The horizontal gradient of $u$ is

$$D_0u = (X_{i}u)_{1\leq i \leq m_1} \in \R^{m_1},$$
or as a vector field
$$D_0u = \sum_{i=1}^{m_1}(X_{i}u)X_{i}.$$
We will also write 

$$D_1u = (X_{i,2}u)_{1 \leq i \leq m_2}$$
for the gradient along the second layer,
$$D_0^2u = (X_{i}X_{j}u)_{1 \leq i \leq m_1}$$
for the second order derivatives corresponding to $V^1$, and $(D_0^2u)^*$ for its symmetric part $\frac{1}{2}(D_0^2u+(D^2_0)^T)$, where $A^T$ denotes the transpose of $A$.  We will also always let $S^{m_1}(\R)$ denote the set of all $m_1 \times m_1$ real symmetric matrices.

\section{Viscosity Solutions and Comparison Principles}
In this section we first define the notion of viscosity solutions used in the main portion of this paper.  Note that other equivalent definitions exist but are not appropriate for the proofs presented here.  We refer the reader to \cite{hallerE}, as well as \cite{manfredi}, \cite{manfredinotes}, \cite{bieske2}, for more details.

We will consider parabolic equations of the form
\begin{equation}\label{main}
u_t + F(t,p,u,D_0u,D_1u,(D^2_0u)^*) = 0
\end{equation}
for continuous $F:[0,T] \times \mathbb{G} \times \R \times \R^{m_1} \times \R^{m_2} \times S^{m_1}(\R) \to \R$.
\begin{defn}
Let $\G$ be a Carnot group, $\mathcal{O} \subset \G$ an open set and $\mathcal{O}_T = \mathcal{O} \times (0,T)$.  Let $(p_0,t_0) \in \mathcal{O}_T$.  A lower semicontinuous function $v$ is a viscosity supersolution of the equation $\eqref{main}$ in $\mathcal{O}_T$ if for all $\varphi \in \mathcal{C}^{2,1}_1(\mathcal{O}_T)$ such that $u-\varphi$ has a local minimum at $(p_0,t_0)$ one has 
$$\varphi_t(p_0,t_0) + F(t_0,p_0,u(p_0,t_0),D_0\varphi(p_0,t_0),D_1\varphi(p_0,t_0),(D_0^2\varphi)^*(p_0,t_0)) \geq 0.$$
\end{defn}
\begin{defn}
Let $\G$ be a Carnot group, $\mathcal{O} \subset \G$ an open set and $\mathcal{O}_T = \mathcal{O} \times (0,T)$.  Let $(p_0,t_0) \in \mathcal{O}_T$.  An upper semicontinuous function $u$ is a viscosity subsolution of the equation $\eqref{main}$ in $\mathcal{O}_T$ if for all $\varphi \in \mathcal{C}^{2,1}_1(\mathcal{O}_T)$ such that $u-\varphi$ has a local maximum at $(p_0,t_0)$ one has 
$$\varphi_t(p_0,t_0) + F(t_0,p_0,u(p_0,t_0),D_0\varphi(p_0,t_0),D_1\varphi(p_0,t_0),(D_0^2\varphi)^*(p_0,t_0)) \geq 0.$$
\end{defn}

\begin{defn}
A function $u$ is a viscosity solution of $\eqref{main}$ if 
$$u^*(p,t):= \lim_{r \downarrow 0}\sup \{u(q,s) : |q^{-1}p|_g + |s-t| \leq r\}$$
is a viscosity subsolution and 
$$u_*(p,t):= \lim_{r \downarrow 0}\inf \{u(q,s) : |q^{-1}p|_g + |s-t| \leq r\}$$
is a viscosity supersolution.
\end{defn}

As in the Euclidean setting, each of the above definitions has an equivalent form stated in terms of parabolic semi-jets.  More details can be found in \cite{hallerE}.

To proceed, we need the following definitions.
\begin{defn}
Let $\G$ be a Carnot group and $\mathcal{O} \subset \G$ an open set.  A continuous function 
$$F: [0,T] \times \bar{\mathcal{O}} \times \R \times \R^{m_1} \times \R^{m_2} \times S^{m_1}(\R) \to \R$$
is degenerate elliptic if 
$$F(t,p,r,\eta,\xi,\mathcal{X}) \leq F(t,p,s,\eta,\xi,\mathcal{Y})$$
whenever $\mathcal{Y} \leq \mathcal{X}$, $r \leq s$.
\end{defn}
\noindent \textit{Note.} We will call the equation $u_t(p,t) + F(t,p,r,\eta,\xi,\mathcal{X}) = 0$ degenerate parabolic if $F$ is degenerate elliptic. 

\begin{defn}
Let $\G$ be a Carnot group and $\mathcal{O} \subset \G$ an open set.  A degenerate elliptic function
$$F: [0,T] \times \bar{\mathcal{O}} \times \R \times \R^{m_1} \times \R^{m_2} \times S^{m_1}(\R) \to \R$$
is admissible if there exist $\sigma > 0$ and $\omega_i : [0,\infty] \to [0,\infty]$ with $\omega_i(0+)=0$ so that $F$ satisfies
\begin{eqnarray*}
|F(t,p,r,\eta,\xi,\mathcal{X}) - F(t,q,r,\eta,\xi,\mathcal{X})| &\leq& \omega_1(|q^{-1}p|_g) \\
|F(t,p,r,\eta^+, \xi, \mathcal{X} - F(t,p,r,\eta^-,\xi,\mathcal{X})| &\leq& \omega_2(|\eta^+ -\eta^-|)\\
|F(t,p,r,\eta,\xi^+, \mathcal{X}) - F(t,p,r,\eta,\xi^-,\mathcal{X})| &\leq& \omega_3(|\xi^+ - \xi^-|)\\
|F(t,p,r,\eta,\xi,\mathcal{X}) - F(t,p,r,\eta,\xi,\mathcal{Y})| &\leq& \omega_4(\|\mathcal{X} - \mathcal{Y}\|)\\
\end{eqnarray*}
for each fixed $t$ and where $\|B\| = \sup \left\{ |\lambda| : \lambda \text{ is an eigenvalue of } B\right\}$.

\end{defn}

In \cite{manfredi}, Beatrous, Bieske, and Manfredi prove the general vector field analogue of the Euclidean comparison principle (\cite[Theorem 3.3]{usersguide}) for admissible PDE  on bounded domains.  In other words, they prove a comparison principle for when the vector fields $\{\partial_{x_1},\ldots, \partial_{x_m}\}$ are replaced by an arbitrary collection of smooth vector fields $\text{\gothfamily{X}}$  and the equation $F(x,u(x),Du,D^2u) = 0$ is replaced by 
$$F(x,u(x),D_{\text{\gothfamily{X}}}u,(D^2_{\text{\gothfamily{X}}}u)^*) = 0$$ 
where $F$ is admissible.  Manfredi \cite{manfredinotes} showed that such a comparison principle still holds when we consider instead 
$$F(x,u(x),D_0u, D_1u, (D_0^2u)^*) = 0.$$  
In \cite{bieske2}, Bieske extended Manfredi's work to parabolic equations of the form 
$$u_t + F(t,p,u,D_0u,(D_0^2u)^*) = 0$$
for $u:\Omega \times (0,T) \to \R$, $\Omega \subset \He^n$ and $F$ admissible.
It is straightforward to extend these proofs to the case when the equation is degenerate parabolic and $u : \Omega \times (0,T) \to \R$, $\Omega \subset \G$ bounded.  Following the proofs in \cite{manfredi}, \cite{manfredinotes} ,\cite{bieske}, and \cite{bieske2},  we use the Euclidean parabolic maximum principle to obtain Euclidean parabolic semi-jets and then "twist" and restrict them appropriately to obtain subriemannian parabolic semi-jets and thus viscosity sub/super solutions.  For the full details of the proof of this theorem, Theorem \ref{comparison}, please see \cite{hallerE}.  Note that this proof requires the definitions of viscosity sub/super solutions to be given in terms of semi-jets but that these definitions are equivalent to the ones given above.

\setcounter{thm}{0}
\begin{thm}\label{comparison}
Let $\G$ be a Carnot group, $\mathcal{O} \subset \G$ be a bounded domain, $\mathcal{O}_T = \mathcal{O} \times (0,T)$ with $T > 0$, and $\psi \in C(\bar{\mathcal{O}})$.  If $u$ is a viscosity subsolution and $v$ is a viscosity supersolution to \begin{equation}\label{problem}
\left\{\begin{array}{ll}
\text{(E)} & u_t + F(t,p,u,D_0u,D_1u,(D^2_0u)^*) = 0 \hspace{5pt} \text{ in } \mathcal{O}_T,\\
\text{(BC)} & u(p,t) = 0 \text{ for } 0 \leq t < T \text{ and } p \in \partial \mathcal{O},\\
\text{(IC)} & u(p,0) = \psi(p) \text{ for } p \in \overline{\mathcal{O}}
\end{array}\right.
\end{equation}
where $F$ is admissible, then $u \leq v$ on $\mathcal{O} \times [0,T)$.  Here, as in \cite{usersguide}, by a viscosity subsolution to $\eqref{problem}$ on $\overline{\mathcal{O}} \times [0,T)$ we mean a function $u$ that is a viscosity subsolution to (E) such that $u(p,t) \leq 0$ for $0 \leq t < T$, $p \in \partial \mathcal{O}$ and $u(p,0) \leq \psi(p)$ for $p \in \overline{\mathcal{O}}$.  We define the notions of supersolution in the same manner.
\end{thm}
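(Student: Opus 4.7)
\bigskip

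\noindent\textbf{Proof proposal for Theorem \ref{comparison}.}

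The plan is to adapt the standard doubling-of-variables strategy from \cite{usersguide} to the sub-Riemannian parabolic setting, following the pattern used in \cite{manfredi}, \cite{manfredinotes}, \cite{bieske}, \cite{bieske2}. Suppose, for contradiction, that $M := \sup_{\mathcal{O}\times[0,T)}(u-v) > 0$. To keep the maximum away from the terminal time I first replace $u$ by $u - \eta/(T-t)$ (equivalently, add a penalty in $t$), which forces the localized maximum to occur on $\mathcal{O}\times[0,T-\delta]$ for some $\delta > 0$, and produces an extra positive term $\eta/(T-t)^2$ in the subsolution inequality that will survive the limiting argument.

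Next I double the variables. Working in exponential coordinates on $\G$, I introduce the auxiliary function
\begin{equation*}
\Phi_{\alpha,\beta}(p,q,t,s) = u(p,t) - v(q,s) - \tfrac{\alpha}{2}|q^{-1}p|_g^{2k} - \tfrac{\beta}{2}(t-s)^2 - \tfrac{\eta}{T-t},
\end{equation*}
with $k$ chosen large enough (depending on the homogeneity of the gauge) so that $|q^{-1}p|_g^{2k}$ is smooth in the Euclidean coordinates of $p,q$ away from the diagonal. Because $\mathcal{O}$ is bounded, $u$ is upper semicontinuous and $v$ is lower semicontinuous, $\Phi_{\alpha,\beta}$ attains a maximum at some $(p_\alpha, q_\alpha, t_\alpha, s_\alpha)$; standard arguments show $|q_\alpha^{-1}p_\alpha|_g \to 0$ and $|t_\alpha - s_\alpha| \to 0$ as $\alpha,\beta \to \infty$, and $(p_\alpha, t_\alpha) \to $ a point of positive $(u-v)$ after sending $\eta \to 0$.

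Now I apply the Euclidean parabolic maximum principle (Crandall--Ishii, \cite{usersguide}) to the Euclidean upper/lower semi-jets of $u$ and $v$ at this maximum point, which produces matrices $X,Y \in S^m(\R)$ and real numbers $a,b$ with $a - b = \beta(t_\alpha - s_\alpha) + \eta/(T-t_\alpha)^2$ and the usual one-sided matrix bound on $X - Y$. The key technical step is then to \emph{twist} these Euclidean jets into the horizontal (subriemannian) jets used in the definitions of viscosity sub/supersolutions: contracting $X$ and $Y$ against the horizontal frame $X_{i}$ at $p_\alpha$ and $q_\alpha$ yields horizontal Hessians $\mathcal{X}, \mathcal{Y} \in S^{m_1}(\R)$ and horizontal/vertical gradients, as in \cite{manfredi}, \cite{bieske2}. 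The ordering of the full matrices passes, up to a controlled error depending on the left-invariance of the frame and the size of $|q_\alpha^{-1}p_\alpha|_g$, to $\mathcal{X} - \mathcal{Y} \leq C(\alpha) \mathrm{Id}$, with $C(\alpha) \to 0$ controllably.

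Subtracting the supersolution inequality for $v$ at $(q_\alpha, s_\alpha)$ from the subsolution inequality for $u$ at $(p_\alpha, t_\alpha)$ gives
\begin{equation*}
\tfrac{\eta}{(T-t_\alpha)^2} \leq F(s_\alpha,q_\alpha,v,\eta^-,\xi^-,\mathcal{Y}) - F(t_\alpha,p_\alpha,u,\eta^+,\xi^+,\mathcal{X}).
\end{equation*}
The four moduli $\omega_i$ in the admissibility hypothesis are exactly what is needed to bound this difference: $\omega_1$ controls the $p \leftrightarrow q$ discrepancy via $|q_\alpha^{-1}p_\alpha|_g$, $\omega_2$ and $\omega_3$ control the gradient discrepancies (which are $O(\alpha|q_\alpha^{-1}p_\alpha|_g^{2k-1})$ from differentiating the penalty), and $\omega_4$ together with degenerate ellipticity controls the Hessian term, reducing it to $\omega_4(\|\max(\mathcal{X}-\mathcal{Y},0)\|)$, which vanishes as $\alpha \to \infty$. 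Sending $\alpha,\beta \to \infty$ and then $\eta \to 0$ yields $0 < \eta/T^2 \leq 0$, the contradiction.

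The main obstacle is the twisting step: guaranteeing that the Euclidean matrix bound on $X-Y$, obtained from differentiating the non-smooth gauge-based penalty $|q^{-1}p|_g^{2k}$, translates into a subriemannian matrix bound whose error terms are swallowed by the admissibility moduli. This requires choosing $k$ compatibly with the step $r$ of $\G$ so that the penalty is $C^2$ in Euclidean coordinates, and tracking how the left-invariant frame changes between $p_\alpha$ and $q_\alpha$. These computations are precisely those carried out for bounded-domain parabolic problems in \cite{bieske2} and for general Carnot groups in \cite{manfredinotes}; the present statement is the straightforward merger of the two, whose detailed execution is deferred to \cite{hallerE}.
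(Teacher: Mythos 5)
Your proposal follows essentially the same route the paper takes: the paper itself only sketches this argument (doubling variables, applying the Euclidean parabolic maximum principle to get Euclidean semi-jets, then ``twisting'' them via the horizontal frame into subriemannian jets and absorbing the discrepancies with the admissibility moduli $\omega_1,\dots,\omega_4$), deferring the detailed computations to \cite{hallerE}, \cite{manfredi}, \cite{manfredinotes}, and \cite{bieske2}, exactly as you do. Your outline, including the gauge-power penalty, the terminal-time barrier, and the identification of the twisting step as the main technical obstacle, matches the paper's intended proof.
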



For the Gauss curvature flow equation, however, it is necessary to consider a function $F$ which is not admissible.  In particular, 
\begin{equation}\label{general}
 u_t + F(D_0 u , (D_0^2 u)^* ) = u_t - \frac{\det \left((D_0^2u)^*\right)}{\left(\sqrt{1+|D_0u|^2}\right)^{m_1+1}}= 0
\end{equation}
where $F$ does not satisfy 
$$ |F(\eta, \mathcal{X}) - F(\eta, \mathcal{Y})| \leq \omega(\|\mathcal{X} - \mathcal{Y}\|).$$ Further, we would like to consider PDE which are defined on all of $\G$, not just those defined on a bounded domain $\Omega \subset \mathbb{G}$.  However, the technique of constructing the subriemannian parabolic jets from Euclidean jets cannot be extended to include this situation.  The Euclidean parabolic comparison principle (see \cite[Theorem 5.1, Theorem 8.3]{usersguide}) yields $\mathcal{X}, \mathcal{Y} \in S^{m}$ such that $\mathcal{X}$ and $\mathcal{Y}$ are the second order parts of the Euclidean sub- and superjets, respectively, and 
$$\left(\begin{array}{cc} \mathcal{X} & 0 \\ 0 & \mathcal{-Y} \end{array}\right) \leq 3 \alpha \left(\begin{array}{cc} I & -I \\ -I & I \end{array}\right) + 3\epsilon I$$
for $\alpha$ large and $\epsilon$ small.  "Twisting" the matrix $3 \epsilon I$ poses a problem because the domain is unbounded. As the elements of the change of basis matrix which takes $\{ \partial_{x_1}, \ldots, \partial_{x_m}\}$ to $\text{\gothfamily{X}}$ are unbounded, the right hand side of the "twisted" form of the above inequality may go to infinity despite letting $\epsilon \to 0$. As the proof of Theorem \ref{comparison} relies heavily on the right hand side remaining bounded for each fixed $\alpha$ as $\epsilon \to 0$, the proof fails in this case.

To resolve these issues we use a different approach to the proof. In \cite{wang:comparison}, Wang proves a comparison principle for fully non-linear subelliptic equations which are defined on a bounded domain but not necessarily admissible.  The strategy for the proof relies first on the existence of a strict viscosity supersolution to $\eqref{general}$ which can be constructed by perturbing any given viscosity supersolution (\cite[Lemma 2.1]{wang:comparison}).  The sup convolution of the viscosity subsolution $u$ and the inf convolution of the strict viscosity supersolution $v$ (\cite[Definition 3.1]{wang:comparison}), denoted $u^{\epsilon}$ and $v_{\epsilon}$ respectively, are then used to construct functions $w^\epsilon(u^\epsilon,v_\epsilon)$ to which Jensen's maximum principle (\cite[Lemma 3.10]{jensen},\cite[Lemma 3.4]{CGG}) is applied.  This yields the existence of a sequence of points at which the functions $w^\epsilon$ are in $\mathcal{C}^{2,1}_1$ and have a local maximum. Using the relationships between the derivatives of $u^\epsilon$ and the derivatives of $v_\epsilon$ at the local maxima along with the fact that $u^\epsilon$ is a subsolution and $v_\epsilon$ is a strict supersolution (\cite[Proposition 3.3]{wang:comparison}) yields the result after taking appropriate limits.    These constructions allow for the lack of admissibility of $F$, though they still require that $F$ be degenerate elliptic.  For the parabolic proof, we follow the same overall strategy except we will perturb the given subsolution to construct a strict subsolution.

To extend this proof to the case when $\Omega = \mathbb{G}$ we need $u$ and $v$ to possess appropriate bounds at infinity.  For this we adapt the proof of \cite[Theorem 1]{im:main} to the Carnot group case and prove the following:

\begin{thm}\label{comparisongauss}
Let $\G$ be a Carnot group.  Suppose that $F: \R^{m_1} \times S^{m_1}(\R) \to \R$ is continuous, degenerate elliptic and is such that if $u(x,t)$ is a subsolution then $\mu u(x,\theta t )$  is a subsolution of \begin{equation}\label{starstar}u_t + F(D_0u, (D_0^2u)^*) = 0\end{equation} for $\theta \in (0,1)$ , $\mu \in (0,1)$ satisfying $\theta \mu^{-(m_1 - 1)} \leq 1$.
Let $h_0: \mathbb{G} \to \R$ be such that $h_0 \in C(\mathbb{G})$ and 
\begin{equation}\label{h0bound}
h_0(x) \geq \epsilon_0 |x|_g^{2r!} \hspace{5pt} \forall x \in \mathbb{G}
\end{equation}
for some $\epsilon_0 > 0$.  Suppose $u$ is a viscosity subsolution and $v$ is a viscosity supersolution to $\eqref{starstar}$ on $\mathbb{G} \times [0,\infty)$ such that 
$$u(x,0) \leq v(x,0) \hspace{5pt} \forall x \in \mathbb{G}$$
and that for each $T > 0$
\begin{equation}\label{h0bound1}
\sup_{(x,t) \in \mathbb{G} \times [0,T]} \left(|u(x,t) - h_0(x)| + |v(x,t) - h_0(x)|\right) < \infty.
\end{equation}
Then 
\begin{enumerate}
\item [(i)] for any $\theta \in (0,1)$ the inequality $u(x,\theta t) \leq v(x,t)$ holds for all $(x,t) \in \mathbb{G} \times (0,\infty)$
\item [(ii)]if we assume instead that $u$ is continuous in $t$ then $u \leq v$ on $\mathbb{G} \times [0,\infty)$
\item [(iii)] or for more general viscosity solutions, if we assume that $h_0 \in C^2(\mathbb{G})$ and 
$$\text{det}_+ D^2_0 h_0(x) \leq C(1+|D_0h_0(x)|^2)^{(m_1+1)/2} \hspace{5pt} \forall x \in \mathbb{G}$$
for some constant $C>0$ and that for each $\epsilon > 0$ there exists a constant $R = R(\epsilon) > 0$ such that for all $x\in \mathbb{G}$, if $|x|_g \geq R$ then 
$$u(x,0) - \epsilon \leq h_0(x) \leq v(x,0) + \epsilon,$$
then $u \leq v$ on $\mathbb{G} \times [0,\infty).$
\end{enumerate}
\end{thm}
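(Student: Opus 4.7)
The plan is to combine Wang's sup/inf convolution technique with the Ishii--Mikami penalty-at-infinity device, using the scaling hypothesis on $F$ to play the role that admissibility plays in Wang's bounded-domain argument. Parts (ii) and (iii) will be deduced from (i) by continuity and by constructing explicit sub/supersolutions built from $h_0$, respectively.

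For part (i), suppose toward contradiction that $M := \sup_{\G \times (0,\infty)}[u(x,\theta t) - v(x,t)] > 0$. Fix $\mu$ slightly above $\theta^{1/(m_1-1)}$ so that $\theta\mu^{-(m_1-1)} < 1$; the scaling hypothesis makes $\mu u(x,\theta t)$ a subsolution of \eqref{starstar}, and with $\mu$ close enough to $1$ the positivity of the supremum survives. Introduce the doubled-variable penalty
\[
\Psi_{\epsilon,\delta,\eta}(x,y,t,s) := \mu u(x,\theta t) - v(y,s) - \frac{\Phi(y^{-1}x)}{\epsilon} - \frac{(t-s)^2}{\epsilon} - \delta\bigl(h_0(x) + h_0(y)\bigr) - \eta(t+s),
\]
where $\Phi$ is a smooth nonnegative coercive function comparable to $|\cdot|_g^{2k}$ near the identity, with $k$ large enough that $\Phi$ has controlled horizontal Hessian on the set where it is small. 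The growth bound $h_0(x) \geq \epsilon_0|x|_g^{2r!}$ together with \eqref{h0bound1} force $\Psi \to -\infty$ as $|x|_g + |y|_g + t + s \to \infty$, so $\Psi$ attains its supremum at some interior point $(x_*,y_*,t_*,s_*)$; the standard Crandall--Ishii--Lions doubling estimates then give $|y_*^{-1}x_*|_g, |t_*-s_*| \to 0$ and the positive-supremum property persists as $\epsilon \to 0$. At the maximum point, replace $u,v$ by their horizontal sup/inf convolutions $u^\sigma, v_\sigma$ (taken with respect to the left-invariant gauge), which are semi-convex/semi-concave in the horizontal-jet sense, and invoke Jensen's maximum principle (cf.\ \cite[Lemma 3.4]{CGG}, \cite[Proposition 3.3]{wang:comparison}) to extract a sequence of nearby points at which both $u^\sigma$ and $v_\sigma$ admit classical horizontal second jets. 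Plugging these jets into the subsolution inequality for $\mu u^\sigma(\cdot,\theta\,\cdot)$ and the supersolution inequality for $v_\sigma$ and subtracting, the strict scaling inequality $\theta\mu^{-(m_1-1)} < 1$ produces a strictly negative residual term of definite sign, while the $\delta h_0$ and $\eta t$ penalties contribute second-order corrections that stay bounded as $\epsilon \to 0$. Sending $\sigma \to 0$, then $\epsilon \to 0$, then $\delta, \eta \to 0$, and finally $\mu \to 1$ produces the contradiction.

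Part (ii) is then immediate: continuity of $u$ in $t$ combined with $u(x,\theta t) \leq v(x,t)$ for every $\theta \in (0,1)$ gives $u(x,t) \leq v(x,t)$ after letting $\theta \uparrow 1$. For part (iii), the hypothesis $\text{det}_+(D_0^2 h_0) \leq C(1+|D_0 h_0|^2)^{(m_1+1)/2}$ combined with $h_0 \in C^2(\G)$ makes $H^\pm(x,t) := h_0(x) \pm Ct$ a classical viscosity super- and subsolution, respectively, of \eqref{starstar} on $\G \times [0,\infty)$. Given $\epsilon > 0$, apply the bounded-domain comparison principle (Theorem \ref{comparison}, or a direct adaptation of the argument of part (i) restricted to a gauge ball) on $B(0,R(\epsilon))$ with boundary data matched via the hypothesis $u(x,0) - \epsilon \leq h_0(x) \leq v(x,0) + \epsilon$ for $|x|_g \geq R(\epsilon)$; this yields $u \leq h_0 + Ct + O(\epsilon)$ and $v \geq h_0 - Ct - O(\epsilon)$ on $\bigl(\G \setminus B(0,R(\epsilon))\bigr) \times [0,T]$, restoring the growth bound \eqref{h0bound1} for the shifted pair. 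Part (ii) then applies, and $\epsilon \to 0$ gives $u \leq v$ globally.

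The main obstacle is producing the contradiction in part (i). Because $F$ is not admissible (its continuity in the matrix argument fails as $\text{det}_+$ approaches zero), one cannot absorb differences of penalty Hessians via a modulus $\omega_4$ as in Theorem \ref{comparison}; instead the sup-convolution radius $\sigma$ must be chosen as a function of $\epsilon, \delta, \eta$ so that the extra horizontal curvature introduced by the $h_0$ and $\Phi$ penalties is quantitatively dominated by the strict scaling slack $(1 - \theta\mu^{-(m_1-1)}) > 0$. It is precisely at this step that the scaling/homogeneity hypothesis on $F$ is used in an essential way, and it is the structural reason the argument applies to horizontal Gauss-curvature flow rather than to general degenerate parabolic equations on $\G$.
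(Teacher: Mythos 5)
Your overall strategy (Wang's sup/inf convolutions plus Jensen's maximum principle, combined with Ishii--Mikami-type growth conditions at infinity, with (ii) obtained by letting $\theta \uparrow 1$ and (iii) via barriers built from $h_0$) matches the paper's, but the core of part (i) takes a different route that has a genuine gap. The paper never doubles the spatial variable. Instead it first uses the scaling together with the growth condition to \emph{localize}: writing $w = \mu u_\theta$ with $\mu < 1$, the bounds $|u - h_0| + |v - h_0| \leq C_0$ and $h_0(x) \geq \epsilon_0 |x|_g^{2r!}$ give $\mu u_\theta(x,t) \leq v(x,t) - (1-\mu)\epsilon_0 |x|_g^{2r!} + 2C_0$, which is $\leq v(x,t)$ once $|x|_g \geq R$. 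The contradiction argument then runs entirely on $B(0,R) \times [0,T)$: Jensen's maximum principle is applied to the single semiconvex function $w^\epsilon - v_\epsilon - \tfrac{1}{\sigma}\left(|\overline{x}^{\,-1}x|_g^{2r!} + |t-\overline{t}\,|^2\right)$, so the only perturbations entering the arguments of $F$ are $O(1/\sigma)$ and vanish as $\sigma \to \infty$; since all derivatives stay in a compact set, pointwise continuity of $F$ suffices to pass to the limit and admissibility is never invoked. The strict inequality needed for the contradiction comes from the standard $-\epsilon/(T-t)$ perturbation of the subsolution, not from the scaling.

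Your doubled-variable functional with penalty $\Phi(y^{-1}x)/\epsilon$ reintroduces exactly the obstruction the paper is built to avoid: the sub- and supersolution inequalities are tested at two different points, and the horizontal Hessian of the penalty contributes an $O(1/\epsilon)$ discrepancy between the two matrix arguments of $F$ which, absent a modulus $\omega_4$ in the matrix variable, cannot be absorbed. Your proposed fix --- that the ``strict scaling slack'' $1 - \theta\mu^{-(m_1-1)} > 0$ quantitatively dominates this discrepancy --- is not available from the hypotheses: the assumption on $F$ only asserts that $\mu u(x,\theta t)$ is again a subsolution when $\theta\mu^{-(m_1-1)} \leq 1$; it supplies no quantitative strict-subsolution gap, and in any case a gap independent of $\epsilon$ cannot control an $O(1/\epsilon)$ Hessian term. (This is the same ``twisting'' difficulty the paper describes when explaining why Theorem \ref{comparison} does not extend to unbounded domains.) The scaling hypothesis is used only to guarantee that $\mu u_\theta$ is still a subsolution for some $\mu < 1$, which is precisely what makes the localization step work; once that reduction is made, the rest is Wang's bounded-domain argument, and your proof can be repaired by replacing the variable doubling with that localization. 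Your sketches of (ii) and (iii) are consistent with the paper, which handles (iii) by citing the Euclidean argument with mollification replaced by left mollification.
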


\begin{proof}
(i)  Fix $\theta \in (0,1)$ and $T > 0$.  We need to show that 
\begin{equation}\label{theta1}
u(x,\theta t) \leq v(x,t) \hspace{5pt} \forall (x,t) \in \mathbb{G} \times [0,T).
\end{equation}
Define $$u_\theta(x,t) := u(x,\theta t).$$  
Let $\mu \in (0,1)$ be such that $\theta\mu^{-(m_1 - 1)} \leq 1$.  
By assumption $\mu u_\theta$ is also a viscosity subsolution.  In order to show $\eqref{theta1}$ it is enough to show that for all $\mu \in (0,1)$ such that $\theta \mu^{-(m_1-1)} \leq 1$ we have 
\begin{equation}\label{theta2}
\mu u_\theta \leq v \text{ on } \mathbb{G} \times [0,T)
\end{equation}
By $\eqref{h0bound}$, there exists $C_0>0$ such that 
$$|u(x,t) - h_0(x)| + |v(x,t) -h_0(x)| \leq C_0 \hspace{5pt} \forall x \in \mathbb{G} \times [0,T).$$
From this we have that 
\begin{eqnarray*}
\mu u_\theta(x,t) &\leq& \mu (h_0(x) + C_0)\\
&\leq& v(x,t) - h_0(x) + C_0 + \mu (h_0(x) + C_0) \\
&\leq& v(x,t) + (1+\mu)C_0 - (1-\mu)\epsilon_0 |x|_g\\
&\leq& v(x,t) - (1-\mu)\epsilon_0 |x|_g^{2r!} + 2C_0.
\end{eqnarray*}
Therefore, there exists $R> 0$ such that if $|x|_g^{2r!} \geq R^{2r!}$ then 
$$v(x,t)- (1-\mu)\epsilon_0 |x|_g^{2r!} + 2C_0 \leq v(x,t).$$
Thus
$$\mu u_\theta(x,t) \leq v(x,t) \text{ on } (\mathbb{G} \backslash B(0,R)) \times [0,T).$$
Now it remains to be shown that 
$$\mu u_\theta(x,t) \leq v(x,t) \text{ on } B(0,R) \times [0,T).$$
To obtain a contradiction, suppose that 
\begin{equation}
\sup_{\overline{B(0,R)} \times [0,T)} (\mu u_\theta - v) > 0.
\end{equation}
To simplify the notation, from now on we will write 
$$w = \mu u_\theta.$$
Consider $\tilde{w} = w - \frac{\epsilon}{T-t}$ instead of $w$.  Then $\tilde{w}$ is a viscosity subsolution and we recall that this implies that if we can show our claim for 
$$\left\{ \begin{array}{l}
w_t + F(D_0w, (D_0^2w)^*) \leq -\delta < 0 \\
\lim_{t \to T} w(x,t) = -\infty \text{ uniformly on } \overline{B(0,R)} \end{array} \right.$$
taking the limit as $\delta \to 0$ will yield the desired result. \\
Since we have that 
$$w \leq v \text{ on } \partial B(0,R) \times [0,T)$$
and 
$$\sup_{\overline{B(0,R)} \times [0,T)} (w - v) > 0$$
we must have 
$$\max_{\partial B(0,R) \times [0,T)} (w - v) \leq 0 < \sup_{B(0,R) \times [0,T)} (w - v).$$  Further, recall that $$u(x,0) \leq v(x,0).$$  Thus by the definition of $w$, $$w(x,0) \leq v(x,0).$$  Therefore, we actually have 
$$\max_{\partial B(0,R) \times [0,T)} (w - v) \leq 0 < \sup_{B(0,R) \times (0,T)} (w - v).$$
Let $\epsilon > 0$ and define for $(x,t) \in \overline{B(0,R)} \times [0,T)$
$$w^{\epsilon}(x,t) = \sup_{(y,s) \in \overline{B(0,R)} \times [0,T)} \left\{w(y,s) - \frac{1}{2\epsilon}(|y^{-1}x|_g^{2r!} + |t-s|^2)\right\}$$
and 
$$v_{\epsilon}(x,t) = \inf_{(y,s) \in \overline{B(0,R)} \times [0,T)} \left\{v(y,s) + \frac{1}{2\epsilon}(|y^{-1}x|_g^{2r!} + |t-s|^2)\right\}.$$
By \cite[Propsition 2.3]{wang:convex} we have that $w^{\epsilon},-v_{\epsilon}$ are semi-convex, $w^{\epsilon},-v_{\epsilon}$ are Lipschitz with respect to $|\cdot|_g + |\cdot|$, and $w^\epsilon \to w$, $v_\epsilon \to v$ pointwise as $\epsilon \to 0$.
Further, if we define $R_0 = \max \{\|w\|_{L^\infty(B(0,R) \times (0,T))}, \|v\|_{L^\infty(B(0,R) \times (0,T))}\}$ (which we know exists by $\eqref{h0bound1}$) and  
$$B(0,R)_{(1+2R_0)\epsilon} = \{x \in B(0,R) : \inf_{y \in \partial B(0,R)}:|y^{-1}x|_g^{2r!} \geq (1+2R_0)\epsilon\}$$
$$[0,T)_{(1+2R_0)\epsilon} = \{t \in [0,T) : \inf_{s \in \{0,T\}}:|t-s|^2 \geq (1+2R_0)\epsilon\}$$  
then we have that $w^\epsilon, v_\epsilon$ are viscosity sub- and supersolutions, respectively, on $B(0,R)_{(1+2R_0)\epsilon} \times [0,T)_{(1+2R_0)\epsilon}$.  
Now let $\partial B_T = (\partial B(0,R) \times [0,T]) \cup (B(0,R) \times \{0\})$ and recall 
$$\max_{\partial B_T} (w- v) \leq 0 < \alpha \leq \sup_{B(0,R) \times (0,T)} (w-v).$$

Therefore, there exists $(\overline{x},\overline{t}\,) \in B(0,R) \times (0,T)$ such that
$$\sup_{B(0,R) \times (0,T)}(w-v) = (w-v)(\overline{x},\overline{t}\,) \geq (w-v)(x,t) \hspace{5pt} \forall (x,t) \in \overline{B(0,R)} \times [0,T).$$
Then for each $\sigma > 0$, 
\begin{eqnarray*}
(w-v)(\overline{x},\overline{t}\,) \!\!\!&-&\!\!\! \frac{1}{\sigma} \left(|\overline{x}\,^{-1} \cdot \overline{x}|_g^{2r!} + |\overline{t}-\overline{t}\,|^2\right) \\
&=& (w-v)(\overline{x},\overline{t}\,)\\
&\geq& (w-v)(x,t) \\
&> & (w-v)(x,t) -\frac{1}{\sigma} \left(|x^{-1} \cdot \overline{x}|_g + |t-\overline{t}\,|\right) 
\end{eqnarray*}
for $(x,t) \neq (\overline{x},\overline{t}\,)$. Therefore, $(w-v)(x,t) -\frac{1}{\sigma} \left(|\overline{x}\,^{-1} \cdot x|_g + |t-\overline{t}\,|\right)$ has a strict maximum at $(\overline{x},\overline{t}\,)$.  Further, for $\sigma$ large enough, 
$$\sup_{B(0,R)\times (0,T)}\left((w-v)(x,t) -\frac{1}{\sigma} \left(|\overline{x}\,^{-1} \cdot x|_g^{2r!} + |t-\overline{t}\,|^2\right)\right) \geq \frac{\alpha}{2} > 0$$

Since $w^\epsilon, -v_\epsilon$ are upper semicontinuous and $w^\epsilon \to w$, $v_\epsilon \to v$ pointwise as $\epsilon \to 0$, we have that 
$$\Big(w^\epsilon - v_\epsilon  - \frac{1}{\sigma} \left(|\overline{x}\,^{-1}\! \cdot\! x|_g^{2r!} + |t-\overline{t}\,|^2\right) - \max_{\partial B_T} \big((w-v)-\frac{1}{\sigma} \left(|\overline{x}\,^{-1} \!\cdot\! x|_g^{2r!} + |t-\overline{t}\,|^2\right)\big)\Big)_+$$
converges monotonically to $0$ pointwise on $\partial B_T$ as $\epsilon \to 0$ where \\$f_+(x,t) = \max \{f(x,t),0\}$.  Therefore, by Dini's Lemma, the above convergence is uniform.  Then for $\epsilon$ small enough and $\sigma$ large enough, 
\begin{eqnarray*}
\max_{\partial B_T} \Big(w^\epsilon - v_{\epsilon} \!\!\! &-&\!\!\! \frac{1}{\sigma} \left(|\overline{x}\,^{-1}\! \cdot\! x|_g^{2r!} + |t-\overline{t}\,|^2\right)\Big)\\&\leq& 0 \\ &<& \frac{\alpha}{2} \\ &\leq& \sup_{B(0,R) \times (0,T)} \left (w -v - \frac{1}{\sigma} \left(|\overline{x}\,^{-1}\! \cdot\! x|_g^{2r!} + |t-\overline{t}\,|^2\right)\right)\\ &\leq& \sup_{B(0,R) \times (0,T)} \left(w^\epsilon - v_\epsilon - \frac{1}{\sigma} \left(|\overline{x}\,^{-1}\! \cdot\! x|_g^{2r!} + |t-\overline{t}\,|^2\right)\right).
\end{eqnarray*}
For the moment, fix $\sigma$ as large as necessary.  Thus we have that 
\begin{eqnarray*}
\max_{\overline{B(0,R)} \times [0,T)} \Big(w^\epsilon - v_\epsilon\!\!\!&-&\!\!\! \frac{1}{\sigma} \left(|\overline{x}\,^{-1}\! \cdot\! x|_g^{2r!} + |t-\overline{t}\,|^2\right)\Big) \\&=&  (w^\epsilon - v_\epsilon)(x_\epsilon^\sigma,t_\epsilon^\sigma)- \frac{1}{\sigma} \left(|\overline{x}\,^{-1}\! \cdot\! x_\epsilon^\sigma|_g^{2r!} + |t_\epsilon^\sigma-\overline{t}\,|^2\right)\\&\geq& \frac{\alpha}{2} > 0
\end{eqnarray*}
where $(x_\epsilon^\sigma, t_\epsilon^\sigma) \in B(0,R) \times (0,T).$  Now it follows from the fact that $w^\epsilon - v_\epsilon$ is Lipschitz continuous and $(w^\epsilon - v_\epsilon) \to (w-v)$ pointwise that $(x_\epsilon^\sigma, t_\epsilon^\sigma) \to (\overline{x},\overline{t}\,)$ as $\epsilon \to 0$.  Combining this with the fact that $B(0,R)_{(1+2R_0)\epsilon} \times [0,T)_{(1+2R_0)\epsilon} \to B(0,R) \times (0,T)$ as $\epsilon \to 0$ yields the existence of $\epsilon$ small enough that $(x_\epsilon^\sigma, t_\epsilon^\sigma) \in B(0,R)_{(1+2R_0)\epsilon} \times [0,T)_{(1+2R_0)\epsilon}$.   Fix such an $\epsilon$.  By the same argument as the one showing $w^\epsilon$ is semi-convex, we see that 
$$w^\epsilon(x,t) - v_\epsilon(x,t) - \frac{1}{\sigma} (|\overline{x}\,^{-1}x|^{2r!}_g + |t-\overline{t}\,|^2)$$ is semi-convex for each $\sigma$ and each $\epsilon$.  By Jensen's maximum principle $\footnote{Jensen's maximum principle gives $a_{l}^{\sigma,k}$ as a vector in the canonical basis.  However, since we are on a bounded domain, by changing the basis and rescaling, we obtain the $a_{l}^{\sigma,k}$ given.}$ (see \cite[Lemma 3.10]{jensen}, \cite[Lemma 3.4]{CGG}), there exist a sequence $\{(y_l^{\sigma,\,\,\epsilon},s_l^{\sigma,\,\,\epsilon})\}$ such that $(y_l^{\sigma,\,\,\epsilon},s_l^{\sigma,\,\,\epsilon}) \to (x_\epsilon^\sigma,t_\epsilon^\sigma)$ as $l \to \infty$ and a sequence $\{(a_l^{\sigma,\,\,\epsilon},b_l^{\sigma,\,\,\epsilon})\}$ satisfying $|a_l^{\sigma,\,\,\epsilon}|_g \leq \frac{1}{l}$, $|b_l^{\sigma,\,\,\epsilon}| \leq \frac{1}{l}$ such that 
\begin{equation}\label{newmax}
w^\epsilon(x,t) - v_\epsilon(x,t) - \frac{1}{\sigma}(|\overline{x}\,^{-1}x|^{2r!}_g + |t-\overline{t}\,|^2) + <a_l^{\sigma,\,\,\epsilon},x>_g + b_l^{\sigma,\,\,\epsilon}t
\end{equation} attains a maximum at $(y_l^{\sigma,\,\,\epsilon},s_l^{\sigma,\,\,\epsilon})$ and is twice differentiable (in the Euclidean sense) at $(y_l^{\sigma,\,\,\epsilon},s_l^{\sigma,\,\,\epsilon})$.  Therefore,
$$Dw^\epsilon(y_l^{\sigma,\,\,\epsilon},s_l^{\sigma,\,\,\epsilon}) = Dv_\epsilon(y_l^{\sigma,\,\,\epsilon},s_l^{\sigma,\,\,\epsilon}) + \frac{1}{\sigma}D(|\overline{x}\,^{-1}y_l^{\sigma,\,\,\epsilon}|^{2r!}_g) + D(<a_l^{\sigma,\,\,\epsilon},y_l^{\sigma,\,\,\epsilon}>_g),$$
$$w^\epsilon_t(y_l^{\sigma,\,\,\epsilon},s_l^{\sigma,\,\,\epsilon}) = (v_\epsilon)_t (y_l^{\sigma,\,\,\epsilon},s_l^{\sigma,\,\,\epsilon}) - b_l^{\sigma,\,\,\epsilon} + \frac{2}{\sigma}(s_l^{\sigma,\,\,\epsilon} - \overline{t}\,),$$
and
$$D^2w^\epsilon(y_l^{\sigma,\,\,\epsilon},s_l^{\sigma,\,\,\epsilon}) \leq D^2v_\epsilon(y_l^{\sigma,\,\,\epsilon},s_l^{\sigma,\,\,\epsilon}) + \frac{1}{\sigma}D^2(|\overline{x}\,^{-1}y_l^{\sigma,\,\,\epsilon}|^{2r!}_g).$$
Then by \cite[Lemma 5.4]{CC}
$$D_0w^\epsilon(y_l^{\sigma,\,\,\epsilon},s_l^{\sigma,\,\,\epsilon}) = D_0v_\epsilon(y_l^{\sigma,\,\,\epsilon},s_l^{\sigma,\,\,\epsilon}) + \frac{1}{\sigma}D_0(|\overline{x}\,^{-1}y_l^{\sigma,\,\,\epsilon}|^{2r!}_g) + (a_l^{\sigma,\,\,\epsilon})_H,$$
$$w^\epsilon_t(y_l^{\sigma,\,\,\epsilon},s_l^{\sigma,\,\,\epsilon}) = (v_\epsilon)_t (y_l^{\sigma,\,\,\epsilon},s_l^{\sigma,\,\,\epsilon}) - b_l^{\sigma,\,\,\epsilon} + \frac{2}{\sigma}(s_l^{\sigma,\,\,\epsilon} - \overline{t}\,),$$ 
and
$$\left(D^2_0w^\epsilon(y_l^{\sigma,\,\,\epsilon},s_l^{\sigma,\,\,\epsilon})\right)^* \leq \left(D^2_0v_\epsilon(y_l^{\sigma,\,\,\epsilon},s_l^{\sigma,\,\,\epsilon})\right)^* + \frac{1}{\sigma}\left(D^2_0(|\overline{x}\,^{-1}y_l^{\sigma,\,\,\epsilon}|^{2r!}_g)\right)^*.$$
By the semiconvexity of $w^\epsilon$ and $-v_\epsilon$ and Jensen's maximum principle we have 
\begin{eqnarray*}
\frac{-1}{\epsilon} I &\leq& D^2w^\epsilon(y_l^{\sigma,\,\,\epsilon},s_l^{\sigma,\,\,\epsilon}) \\&\leq& D^2v_\epsilon(y_l^{\sigma,\,\,\epsilon},s_l^{\sigma,\,\,\epsilon}) + \frac{1}{\sigma} D^2(|\overline{x}\,^{-1}y_l^{\sigma,\,\,\epsilon}|^{2r!}) \\&\leq& \frac{1}{\epsilon}I + \frac{1}{\sigma}D^2(|\overline{x}\,^{-1}y_l^{\sigma,\,\,\epsilon}|^{2r!}) \\&\leq& \frac{1}{\epsilon}I + cI
\end{eqnarray*}
since we are working on a bounded domain.  Therefore, we also have that \\$D_0^2w^\epsilon(y_l^{\sigma,\,\,\epsilon},s_l^{\sigma,\,\,\epsilon})$ and $D_0^2v_\epsilon(y_l^{\sigma,\,\,\epsilon},s_l^{\sigma,\,\,\epsilon})$ are bounded above and below since the change of basis matrix $A$ is smooth and we are on a bounded domain.  Since $w^\epsilon$ is a viscosity subsolution and $v_\epsilon$ is a viscosity supersolution and both are differentiable at $(y_l^{\sigma,\,\,\epsilon},s_l^{\sigma,\,\,\epsilon})$, 
$$w^\epsilon_t(y_l^{\sigma,\,\,\epsilon},s_l^{\sigma,\,\,\epsilon}) + F\left(D_0w^\epsilon(y_l^{\sigma,\,\,\epsilon},s_l^{\sigma,\,\,\epsilon}),\left(D^2_0w^\epsilon(y_l^{\sigma,\,\,\epsilon},s_l^{\sigma,\,\,\epsilon})\right)^*\right) \leq - \delta$$
and
\begin{eqnarray*}
w^\epsilon_t(y_l^{\sigma,\,\,\epsilon},s_l^{\sigma,\,\,\epsilon}) &+& b_l^{\sigma,\,\,\epsilon} - \frac{2}{\sigma}(s_l^{\sigma,\,\,\epsilon} - \overline{t}\,) \\&+& F\Bigg(D_0w^\epsilon(y_l^{\sigma,\,\,\epsilon},s_l^{\sigma,\,\,\epsilon})- \frac{1}{\sigma}D_0(|\overline{x}\,^{-1}y_l^{\sigma,\,\,\epsilon}|^{2r!}) - (a_l^{\sigma,\,\,\epsilon})_H,\\&& \hspace{5pt} \left(D^2_0w^\epsilon(y_l^{\sigma,\,\,\epsilon},s_l^{\sigma,\,\,\epsilon})\right)^* - \frac{1}{\sigma}\left(D_0^2(|\overline{x}\,^{-1}y_l^{\sigma,\,\,\epsilon}|^{2r!})\right)^*\Bigg) \geq 0.
\end{eqnarray*}
Combining these and applying the degenerate ellipticity of $F$
\begin{eqnarray*}
0 < \delta &\leq& w^\epsilon_t(y_l^{\sigma,\,\,\epsilon},s_l^{\sigma,\,\,\epsilon}) + b_l^{\sigma,\,\,\epsilon} - \frac{2}{\sigma}(s_l^{\sigma,\,\,\epsilon} - \overline{t}\,)\\&+& F\Bigg(D_0w^\epsilon(y_l^{\sigma,\,\,\epsilon},s_l^{\sigma,\,\,\epsilon})- \frac{1}{\sigma}D_0(|\overline{x}\,^{-1}y_l^{\sigma,\,\,\epsilon}|^{2r!}) - (a_l^{\sigma,\,\,\epsilon})_H,\\&&  \left(D^2_0w^\epsilon(y_l^{\sigma,\,\,\epsilon},s_l^{\sigma,\,\,\epsilon})\right)^* - \frac{1}{\sigma}\left(D_0^2(|\overline{x}\,^{-1}y_l^{\sigma,\,\,\epsilon}|^{2r!})\right)^*\Bigg) \\ 
 &-& w^\epsilon_t(y_l^{\sigma,\,\,\epsilon},s_l^{\sigma,\,\,\epsilon}) - F\left(D_0w^\epsilon(y_l^{\sigma,\,\,\epsilon},s_l^{\sigma,\,\,\epsilon}),\left(D^2_0w^\epsilon(y_l^{\sigma,\,\,\epsilon},s_l^{\sigma,\,\,\epsilon})\right)^*\right)\\
 &=& F\Bigg(D_0w^\epsilon(y_l^{\sigma,\,\,\epsilon},s_l^{\sigma,\,\,\epsilon})- \frac{1}{\sigma}D_0(|\overline{x}\,^{-1}y_l^{\sigma,\,\,\epsilon}|^{2r!}) - (a_l^{\sigma,\,\,\epsilon})_H,\\&&  \left(D^2_0w^\epsilon(y_l^{\sigma,\,\,\epsilon},s_l^{\sigma,\,\,\epsilon})\right)^* - \frac{1}{\sigma}\left(D_0^2(|\overline{x}\,^{-1}y_l^{\sigma,\,\,\epsilon}|^{2r!})\right)^*\Bigg)\\
 &-& F\left(D_0w^\epsilon(y_l^{\sigma,\,\,\epsilon},s_l^{\sigma,\,\,\epsilon}),\left(D^2_0w^\epsilon(y_l^{\sigma,\,\,\epsilon},s_l^{\sigma,\,\,\epsilon})\right)^*\right)\\&+& b_l^{\sigma,\,\,\epsilon} - \frac{2}{\sigma}(s_l^{\sigma,\,\,\epsilon} - \overline{t}\,)
 \end{eqnarray*}
We then take the limits $l,\,\sigma \to \infty$ of both sides of the inequality.  By the continuity of $F$ we may pass the limits $l, \sigma \to \infty$ inside.  Notice that these limits exist because the corresponding Euclidean derivatives are bounded and the change of basis matrix is bounded since we are working on a bounded domain.  Thus letting $l,\sigma \to \infty$ we have a contradiction.  This yields (i).  \\
(ii)  Taking the limit at $\theta \to 1$ in (i) yields (ii). \\ 
(iii)  The proof of part (iii) follows exactly as in \cite[Theorem 1(b)]{im:main} with the exception that "mollification" is replaced by "left mollification."  
\end{proof}

Notice that if our domain was bounded instead of all of $\mathbb{G}$, we could use the same proof as above without the extra assumptions on $u$ and $v$ to obtain the following:
\begin{cor}\label{comparisongaussbounded}
Let $\G$ be a Carnot group.  Suppose that $\Omega \subset \mathbb{G}$ is a bounded domain and $F:\R^{m_1} \times S^{m_1}(\R) \to \R$ is degenerate elliptic.  Suppose that $u$ is a viscosity subsolution and $v$ is a viscosity supersolution to $\eqref{general}$ on $\Omega \times [0,\infty)$ such that 
$$u(x,0) \leq v(x,0) \hspace{5pt}\forall x \in \Omega$$
and $$u(x,t) \leq v(x,t) \hspace{5pt} \forall (x,t) \in \partial \Omega \times [0,\infty).$$
Then $u \leq v$ on $\Omega \times [0,\infty)$.
\end{cor}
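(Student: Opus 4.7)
The plan is to run the same sup/inf-convolution plus Jensen-maximum-principle argument that proves Theorem \ref{comparisongauss}, but simply delete the steps that are only needed to kill growth at infinity, namely the introduction of the parameter $\theta$, the multiplier $\mu u_\theta$, the growth bound $h_0(x) \geq \epsilon_0 |x|_g^{2r!}$, and the reduction from $\mathbb{G}$ to the gauge ball $B(0,R)$. Since $\Omega$ is already bounded and the hypothesis supplies $u \leq v$ on the entire parabolic boundary $\partial_p \Omega_T := (\partial \Omega \times [0,T]) \cup (\Omega \times \{0\})$ for every $T > 0$, the outer step in the proof of Theorem \ref{comparisongauss} (in which the growth hypothesis forced $\mu u_\theta \leq v$ outside some $B(0,R)$) is replaced directly by the boundary assumption.

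Suppose for contradiction that there exists $T > 0$ with $\sup_{\Omega \times [0,T)}(u - v) > 0$. Following \cite{wang:comparison}, replace $u$ by $\tilde u = u - \frac{\epsilon}{T-t}$, which is still a viscosity subsolution, now satisfies $\tilde u_t + F(D_0 \tilde u,(D_0^2 \tilde u)^*) \leq -\delta < 0$ in the viscosity sense, and sends $\tilde u \to -\infty$ uniformly on $\overline{\Omega}$ as $t \to T$. Together with $u \leq v$ on $\partial_p \Omega_T$, this guarantees that $\tilde u - v$ attains a positive interior maximum at some $(\overline{x},\overline{t}\,) \in \Omega \times (0,T)$. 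Form the sup convolution $\tilde u^\epsilon$ and the inf convolution $v_\epsilon$ in the gauge-plus-time metric, as in the main theorem; by \cite[Proposition 2.3]{wang:convex} they are semi-convex and Lipschitz, converge pointwise to $\tilde u$ and $v$ as $\epsilon \to 0$, and remain viscosity sub- and supersolutions on a slightly shrunken parabolic domain.

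Next, add the strict penalty $\frac{1}{\sigma}(|\overline{x}\,^{-1}\cdot x|_g^{2r!} + |t - \overline{t}\,|^2)$ and, using Dini's lemma on the compact set $\partial_p \Omega_T$ exactly as in Theorem \ref{comparisongauss}, conclude that for $\epsilon$ small and $\sigma$ large the supremum of $\tilde u^\epsilon - v_\epsilon - \frac{1}{\sigma}(|\overline{x}\,^{-1}\cdot x|_g^{2r!} + |t - \overline{t}\,|^2)$ is attained at an interior point $(x_\epsilon^\sigma,t_\epsilon^\sigma)$. Apply Jensen's maximum principle (\cite[Lemma 3.10]{jensen}, \cite[Lemma 3.4]{CGG}) to obtain a sequence $(y_l^{\sigma,\epsilon},s_l^{\sigma,\epsilon})$ of twice-differentiable points converging to $(x_\epsilon^\sigma,t_\epsilon^\sigma)$, with arbitrarily small linear perturbations $a_l^{\sigma,\epsilon}, b_l^{\sigma,\epsilon}$. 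Transfer the resulting Euclidean gradient and Hessian identities to horizontal gradients and symmetrized horizontal Hessians via \cite[Lemma 5.4]{CC}, subtract the strict subsolution inequality for $\tilde u^\epsilon$ from the supersolution inequality for $v_\epsilon$, and use degenerate ellipticity of $F$ to obtain an inequality of the form $0 < \delta \leq \bigl[F(\cdots) - F(\cdots)\bigr] + b_l^{\sigma,\epsilon} - \tfrac{2}{\sigma}(s_l^{\sigma,\epsilon} - \overline{t}\,)$, where the two arguments of $F$ differ by terms that vanish as $l,\sigma \to \infty$.

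The only place where boundedness of $\Omega$ really matters is the closing limit: one needs the Euclidean gradients and Hessians of $|\overline{x}\,^{-1}\cdot y|_g^{2r!}$, as well as the change of basis matrix from $\{\partial_{x_i}\}$ to $\text{\gothfamily{X}}$, to stay uniformly bounded so that passing $l,\sigma \to \infty$ inside the continuous $F$ produces a genuine limit. This is immediate on the compact $\overline{\Omega}$ and is precisely the step that fails without a growth assumption in Theorem \ref{comparisongauss}. The contradiction $0 < \delta \leq 0$ then follows after sending $\epsilon \to 0$ and $\delta \to 0$, establishing $u \leq v$ on $\Omega \times [0,\infty)$.
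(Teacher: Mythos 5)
Your proposal is correct and is essentially the paper's own argument: the paper proves this corollary precisely by observing that the proof of Theorem \ref{comparisongauss} goes through verbatim on a bounded domain once the scaling by $\mu$, $\theta$, the growth hypothesis on $h_0$, and the reduction to a gauge ball are deleted and the exterior estimate is replaced by the assumed boundary inequality. You have identified exactly the steps to remove and correctly noted that boundedness of $\Omega$ is what keeps the change-of-basis matrix and the derivatives of the gauge penalty uniformly bounded in the final limit.
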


We will now use Perron's Method to construct viscosity solutions to the degenerate parabolic equation
$$u_t + F(t,p,u,D_0u,D_1u,(D_0^2u)^*) = 0.$$

In order to prove the existence of such solutions, we need the following lemmas, the proof of which follow the Euclidean proof in \cite{Giga} with the modifications that Euclidean derivatives are replaced with horizontal derivatives and the Euclidean norm is replaced with the gauge norm.  As such, the proof are omitted here.   
\begin{thm}\label{existence}\textnormal{(Follows as in \cite[Theorem 2.4.3]{Giga})}
Let $\G$ be a Carnot group and $\mathcal{O} \subset \G$.  Assume that $F:[0,T] \times \mathbb{G} \times \R \times \R^{m_1} \times \R^{m_2} \times S^{m_1}(\R) \to \R$ is degenerate elliptic.  Let $h_-$ and $h_+$ be a sub- and supersolution of
\begin{equation}\label{eq}
v_t + F(t,p,v,D_0v,D_1v,(D^2_0v)^*) = 0
\end{equation}
in $\mathcal{O}_T = \mathcal{O} \times (0,T)$, respectively.  If $h_- \leq h_+$ in $\mathcal{O}_T$, then there exists a viscosity solution $u$ of $\eqref{eq}$ that satisfies $h_- \leq u \leq h_+$ in $\mathcal{O}_T$.  
\end{thm}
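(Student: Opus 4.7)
The plan is to adapt the standard Perron method to this Carnot-group parabolic setting. First, I would define
\[
u(p,t) = \sup\bigl\{w(p,t) : h_- \leq w \leq h_+ \text{ on } \mathcal{O}_T \text{ and } w \text{ is a viscosity subsolution of \eqref{eq}}\bigr\}.
\]
The family is nonempty (it contains $h_-$) and every member lies below $h_+$, so automatically $h_- \leq u \leq h_+$. It then suffices to show that $u^*$ is a viscosity subsolution and $u_*$ is a viscosity supersolution, for then $u$ is a viscosity solution in the sense of the definitions given in Section 3.

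For the subsolution property of $u^*$, I would invoke the standard fact that the upper semicontinuous envelope of the pointwise supremum of a family of viscosity subsolutions is again a subsolution. Passing to the semi-jet formulation equivalent to the given definitions (see \cite{hallerE}), if $\varphi \in \mathcal{C}^{2,1}_1(\mathcal{O}_T)$ is such that $u^* - \varphi$ attains a strict local maximum at $(p_0,t_0)$, I would select $(p_k,t_k) \to (p_0,t_0)$ and subsolutions $w_k$ from the family with $w_k(p_k,t_k) \to u^*(p_0,t_0)$. Each $w_k - \varphi$ attains a nearby local maximum at points $(q_k,s_k) \to (p_0,t_0)$, and continuity of $F$ together with the horizontal derivatives of $\varphi$ lets the subsolution inequality at $(q_k,s_k)$ pass to the limit.

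For the supersolution property of $u_*$, I would argue by contradiction. If $u_*$ failed at some $(p_0,t_0) \in \mathcal{O}_T$, there would exist $\varphi \in \mathcal{C}^{2,1}_1$ with $u_* - \varphi$ having a strict local minimum at $(p_0,t_0)$, $u_*(p_0,t_0) = \varphi(p_0,t_0)$, and the supersolution inequality for $\varphi$ violated strictly at that point. Continuity of $F$ and of the horizontal derivatives of $\varphi$ extends this strict inequality to a gauge-parabolic cylinder $N_\rho = B(p_0,\rho) \times (t_0 - \rho^2, t_0 + \rho^2)$. I would then construct a horizontal bump
\[
\tilde\varphi(p,t) = \varphi(p,t) + \tfrac{\delta}{2} - C\bigl(|p_0^{-1}p|_g^{2r!} + (t-t_0)^2\bigr)
\]
with $\delta, C > 0$ chosen so that $\tilde\varphi$ still satisfies the strict subsolution inequality on $N_\rho$ and $\tilde\varphi < \varphi$ on the parabolic boundary of a slightly smaller cylinder. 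Setting
\[
U(p,t) = \begin{cases} \max\{u(p,t), \tilde\varphi(p,t)\} & \text{on the smaller cylinder},\\ u(p,t) & \text{elsewhere},\end{cases}
\]
the standard gluing argument produces a viscosity subsolution with $U > u$ at some point near $(p_0,t_0)$ where $u$ is close to $u_*$, contradicting the maximality of $u$. The check $h_- \leq U \leq h_+$ uses $\varphi(p_0,t_0) = u_*(p_0,t_0) \leq h_+(p_0,t_0)$ and shrinking $\rho$ and $\delta$ to keep $\tilde\varphi \leq h_+$.

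The main obstacle is the bump construction itself: since $F$ is not assumed admissible, one cannot invoke a matrix-variable modulus of continuity, so $C$ and $\rho$ must be chosen with care using only the continuity of $F$ at $(D_0\varphi,D_1\varphi,(D_0^2\varphi)^*)(p_0,t_0)$ so that the horizontal derivatives of the correction $-C(|p_0^{-1}p|_g^{2r!} + (t-t_0)^2)$ stay small enough on $N_\rho$ to preserve the strict subsolution inequality, while still being large enough that $\tilde\varphi$ drops below $\varphi$ on the lateral boundary. This is precisely the point where the paper defers to \cite{Giga}, with the modification of replacing Euclidean derivatives by horizontal ones and the Euclidean norm by the gauge norm.
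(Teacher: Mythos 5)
Your proposal is correct and is essentially the paper's intended argument: the paper omits the proof entirely, deferring to the Euclidean Perron construction of \cite[Theorem 2.4.3]{Giga} with Euclidean derivatives replaced by horizontal ones and the Euclidean norm by the gauge norm, which is precisely the sup-of-subsolutions plus bump-function scheme you carry out (and $|p_0^{-1}p|_g^{2r!}$ is indeed the right smooth replacement for $|x-x_0|^2$, being a polynomial in exponential coordinates). The one point worth tightening is that keeping $U \leq h_+$ requires the strict inequality $u_*(p_0,t_0) < h_+(p_0,t_0)$, which follows because equality would let $h_+$ inherit the test function $\varphi$ and violate its own supersolution property.
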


Notice that Perron's method does not require the use of the comparison principle in order to obtain the existence of a viscosity solution to 
\begin{equation}\label{eq1}
v_t + F(t,p,v,D_0v,D_1v,(D^2_0v)^*) = 0.
\end{equation}
However, in order to show the continuity or uniqueness of such solutions, we will need the comparison principle.  When $\mathcal{O}$ is bounded, we  use the comparison principle given by Corollary \ref{comparisongaussbounded}. 
\begin{thm}
Let $\G$ be a Carnot group and $\mathcal{O} \subset \G$ be bounded.  Suppose $F: \R^{m_1} \times S^{m_1}(\R) \to \R$ is degenerate elliptic.  Suppose $f$ and $g$ are sub- and supersolutions of $$v_t + F(D_0v,(D^2_0v)^*) = 0,$$ respectively, in $\mathcal{O}_T = \mathcal{O} \times (0,T)$ satisfying $f \leq g$ on $\mathcal{O}_T$ and $f_* =g^*$ on $\partial\mathcal{O}_T = (\mathcal{O}\times \{0\}) \cup(\partial\mathcal{O} \times [0,T))$.  Then there is a viscosity solution $u$ of $\eqref{eq1}$ satisfying $u \in C(\overline{\mathcal{O}}_T)$ and $f \leq u \leq g$ on $\bar{\mathcal{O}}_T$. 
\end{thm}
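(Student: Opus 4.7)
The plan is to combine Perron's method (Theorem \ref{existence}) with the comparison principle (Corollary \ref{comparisongaussbounded}). Since $f$ and $g$ are sub- and supersolutions with $f \leq g$ on $\mathcal{O}_T$, Theorem \ref{existence} immediately produces a viscosity solution $u$ satisfying $f \leq u \leq g$ on $\mathcal{O}_T$. By definition of viscosity solution, $u^*$ is a subsolution and $u_*$ is a supersolution, and we automatically have $u_* \leq u^*$; the goal is to upgrade this to equality so that $u$ becomes continuous.

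The main step is to verify that $u^* = u_*$ on the parabolic boundary $\partial \mathcal{O}_T$ so that Corollary \ref{comparisongaussbounded} may be applied to the pair $(u^*,u_*)$. Since $f$ is upper semicontinuous as a subsolution we have $f = f^*$, and since $g$ is lower semicontinuous as a supersolution we have $g = g_*$. Passing the inequality $f \leq g$ to envelopes gives $f^* \leq g^*$ and $f_* \leq g_*$ on $\overline{\mathcal{O}}_T$. Combining these with the hypothesis $f_* = g^*$ on $\partial \mathcal{O}_T$ collapses the chain
\[
f_* \leq f^* \leq g^* = f_*, \qquad f_* \leq g_* \leq g^* = f_*
\]
on $\partial \mathcal{O}_T$, so all four envelopes coincide there. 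The bounds $f \leq u \leq g$ in $\mathcal{O}_T$ then force $f_* \leq u_* \leq u^* \leq g^*$ on $\overline{\mathcal{O}}_T$, which in particular gives $u_* = u^*$ on $\partial \mathcal{O}_T$.

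Applying Corollary \ref{comparisongaussbounded} to $u^*$ as subsolution and $u_*$ as supersolution on $\mathcal{O} \times [0,T)$, with the boundary and initial data just established, yields $u^* \leq u_*$ on $\mathcal{O} \times [0,T)$. Combined with the trivial reverse inequality $u_* \leq u^*$, we obtain $u_* = u^*$, so $u$ is continuous on $\overline{\mathcal{O}}_T$ and retains the bound $f \leq u \leq g$. The main obstacle is not any analytic estimate but the careful bookkeeping of semicontinuous envelopes at the parabolic boundary: the precise hypothesis $f_* = g^*$ on $\partial \mathcal{O}_T$ is exactly what is needed to trap $u^*$ and $u_*$ together there, and without this identity the comparison principle could not be invoked to close the argument.
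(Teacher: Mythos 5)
Your proof is correct and follows essentially the same route as the paper, which obtains this theorem by combining Perron's method (Theorem \ref{existence}) with the bounded-domain comparison principle (Corollary \ref{comparisongaussbounded}), following the standard argument in Giga's book. Your bookkeeping of the semicontinuous envelopes at the parabolic boundary is exactly the step the paper leaves implicit, and it is carried out correctly.
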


\begin{thm}
Let $\G$ be a Carnot group, $\mathcal{O} \subset \G$ be bounded, and $\mathcal{O}_T  = \mathcal{O} \times (0,T)$.  Suppose $F: \R^{m_1} \times S^{m_1}(\R) \to \R$ is degenerate elliptic.  For given $g \in C(\partial \mathcal{O}_T)$ there is at most one solution $u$ of $$v_t + F(D_0v,(D^2_0v)^*) = 0$$ in $\mathcal{O}_T$ with $u = g$ on $\partial \mathcal{O}_T$. 
\end{thm}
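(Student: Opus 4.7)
The plan is to reduce the uniqueness claim directly to the comparison principle established in Corollary \ref{comparisongaussbounded}. Suppose $u_1$ and $u_2$ are both viscosity solutions of $v_t + F(D_0 v, (D_0^2 v)^*) = 0$ in $\mathcal{O}_T$ with $u_1 = u_2 = g$ on the parabolic boundary $\partial \mathcal{O}_T = (\mathcal{O} \times \{0\}) \cup (\partial \mathcal{O} \times [0, T))$. Then, in particular, $u_1$ is a viscosity subsolution and $u_2$ is a viscosity supersolution, with $u_1(x, 0) = g(x, 0) = u_2(x, 0)$ for every $x \in \mathcal{O}$ and $u_1 = g = u_2$ on $\partial \mathcal{O} \times [0, T)$, so the hypotheses of Corollary \ref{comparisongaussbounded} hold with equality on the parabolic boundary.

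Before invoking the corollary I would pause on one small point: it is stated on $\Omega \times [0, \infty)$, whereas the present situation is set on $\mathcal{O}_T$ with $T < \infty$. However, the argument behind Corollary \ref{comparisongaussbounded}, being a direct adaptation of the proof of Theorem \ref{comparisongauss}, is actually carried out for each fixed finite time horizon $T$: this is precisely the role of the penalization $\tilde{w} = w - \epsilon/(T - t)$ used to force blow-up at the terminal time. The rest of the argument — sup/inf convolutions $u^{\epsilon}, v_{\epsilon}$, Jensen's maximum principle applied to the semi-convex difference, and the degenerate ellipticity of $F$ — is local in time and local to the bounded spatial domain, so it goes through verbatim on $\mathcal{O}_T$. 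Applying this yields $u_1 \leq u_2$ throughout $\mathcal{O}_T$.

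Swapping the roles of subsolution and supersolution — legitimate because $F$ does not depend on $u$ itself and the comparison hypotheses are symmetric under such a swap — the same argument applied to $u_2$ (as subsolution) and $u_1$ (as supersolution) gives $u_2 \leq u_1$ on $\mathcal{O}_T$. Combining the two inequalities produces $u_1 \equiv u_2$ on $\mathcal{O}_T$, which is the desired uniqueness. The main obstacle is essentially bookkeeping: one must verify that Corollary \ref{comparisongaussbounded} truly applies to the finite-$T$ setting (it does, because the proof was built for finite $T$ and the stated $[0, \infty)$ simply means "for each $T > 0$"), and that the corollary is genuinely symmetric in its sub/super hypotheses. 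Beyond these observations no new analytic input is required.
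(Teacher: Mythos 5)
Your proposal is correct and matches the paper's approach: the paper omits an explicit proof of this bounded-domain uniqueness statement, but its proof of the analogous unbounded-domain uniqueness theorem is exactly your argument — apply the comparison principle once with $u_1$ as subsolution and $u_2$ as supersolution, then swap roles, and combine the two inequalities. Your side remarks about the finite horizon $T$ and the symmetry of the hypotheses are accurate and require no further justification.
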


In order to use the comparison principle for an unbounded domain, Theorem \ref{comparisongauss}, we need to have the existence of a function $h_0(x)$ satisfying the hypothesis.  Because of the strong relationship between the sub- and supersolutions and this $h_0$, we save the existence of a continuous viscosity solution to $\eqref{general}$ on $\mathbb{G} \times [0,\infty)$ satisfying given initial conditions for the specific example of the horizontal Gauss curvature flow equation given in the next section.

\section{Application to the Horizontal Gauss Curvature Flow Equation}
Our goal is to prove the existence of viscosity solutions to the parabolic equation which describes the horizontal Gauss curvature flow of the graph of a continuous function $u$.  To do this, we need to introduce some more notations and develop a formula for the horizontal Gauss curvature of such a surface.  These ideas were first developed by D. Danielli, N. Garofalo, and D.-M. Nhieu in \cite{dgn} and studied further by L. Capogna, S. Pauls, and J. Tyson in \cite{cpt}, R. Hladky and S. Pauls in \cite{hp:mean}, Danielli, Garofalo, and Nhieu in \cite{dgn3}, and Selby in \cite{selby}.  In the next section we follow \cite{cpt} in the development of the definition of the horizontal second fundamental form from which follows the definition of the horizontal Gauss curvature. 
\subsection{Hypersurfaces in Carnot Groups and Horizontal Gauss Curvature}
For each $L > 0$ we define Riemannian metrics $g_L$, the anisotropic dilations of the metric $g$, characterized by $g_L(X_{i,1},X_{j,1}) = g(X_{i,1},X_{j,1}) = \delta_{ij}$, $g_L(X_{i,1},X_{j,k}) = g_L(X_{i,1},X_{j,k}) = 0$ for all $k \neq 1$ and for all $i,j$, and $g_L(X_{i,j},X_{k,l}) = L^{2/l}\delta_{ik}\delta_{jl}$ for all $j,l \neq 1$.  We define a new, rescaled frame which is orthonormal with respect to $g_L$:
$$\mathcal{F}_1^{\mathbb{G}} = \left\{ X_{1}, \ldots, X_{m_1}, \tilde{X}_{1,2}, \ldots , \tilde{X}_{m_r,r}\right\}$$
where $X_{i} = X_{i,1}$ and $\tilde{X}_{i,j} = L^{-1/j}X_{i,j}$ for $2 \leq j \leq r$.  

Let $M$ be a smooth hypersurface in $\mathbb{G}$ given by 
$$M = \{ x \in \mathbb{G} : u(x) = 0\}$$
where $u: \mathbb{G} \to \R$ is a smooth function.  Denote the characteristic set of $M$ by $\Sigma(M) = \{x \in M : H_x\mathbb{G} \subset T_xM\}$ where $H_x\mathbb{G}$ is the horizontal space of $\mathbb{G}$ at $x$ and $T_xM$ is the tangent space to $M$ at $x$.  
\setcounter{thm}{8}
\begin{defn}
For any non-characteristic point, the unit horizontal normal to $M$ is defined as the normalized projection of the Riemannian normal to the horizontal subbundle.  In the $\mathcal{F}_1^{\mathbb{G}}$ frame it is given as
$$ \nu_0 = \frac{(X_1)uX_1 + \cdots +(X_{m_1}u)}{|(X_1)uX_1 + \cdots +(X_{m_1}u)|}.$$
\end{defn}
Letting $$\nu_L = \frac{D_Lu}{|D_Lu|}$$ denote the Riemannian unit normal (with respect to $g_L$), we note that $$\lim_{L \to \infty} \nu_L = \nu_0$$ uniformly on compact sets of $M\backslash \Sigma(M)$.  We next consider the basis for $T\mathbb{G}|_M$ adapted to the submanifold M:
$$\mathcal{F}_2^{\mathbb{G}} = \{Z_1, \ldots, Z_{m-1},\nu_L\}$$
where $\{Z_i\}$ is an orthonormal basis for $TM$ in the metric $g_L.$

\begin{defn}
The Riemannian second fundamental form of $M$ in the coordinate frame $\mathcal{F}_2^{\mathbb{G}}$ in $(M,g_L)$ is given by 
$$II_L^{M,\mathcal{F}_2^{\mathbb{G}}} = \left(\left \langle \nabla_{Z_i}\nu_L, Z_j \right \rangle_L \right)_{i,j = 1, \ldots, m-1}$$
where $\nabla$ is the Levi-Civita connection associated to $g_L$.  
\end{defn}

At any non-characteristic point we set 
$$T_0 = \frac{\nu_L - \left \langle \nu_L, \nu_0\right \rangle_L \nu_0}{|\nu_L - \left \langle \nu_L, \nu_0\right \rangle_L \nu_0}|_L,$$
$$a_L = \left \langle \nu_L, \nu_0\right \rangle_L$$
and
$$b_L = \left \langle \nu_L, T_0 \right \rangle_L.$$
Thus $$\nu_L = a_L \nu_0 + b_LT_0.$$
We now define a new basis.
\begin{defn}
Let $(\mathbb{G},g_L)$ be as above and $M$ be a smooth hypersurface in $\mathbb{G}$ given as a level set of a function $u: \mathbb{G} \to \R$.  Then
$$\mathcal{F}_3^{\mathbb{G},g_L} = \left\{ e_0, e_1, \ldots, e_{m_1 - 1}, T_1, \ldots, T_{n-1}, \nu_L\right\}$$
is a basis for $T\mathbb{G}|_{M\backslash \Sigma(M)}$ of the form $\mathcal{F}_2^{\mathbb{G}}$, where 
$$e_0 = b_L\nu_0 - a_LT_0,$$
$\{e_1,\ldots,  e_{m_1-1}\}$ is an orthonormal basis for $HM = TM \cap H\mathbb{G}$, and $\{T_1, \ldots , T_{n-1}\}$ is an orthonormal basis for $VM = TM \cap V\mathbb{G}$.  
\end{defn}

Now we are ready to define the horizontal second fundamental form.

\begin{defn}
Given a smooth hypersurface $M \subset (\mathbb{G}, g_L)$ and the adapted basis $\mathcal{F}_3^{\mathbb{G}}$, we define the horizontal second fundamental form at any non-characteristic point as 
$$II_0^M = \left(\left \langle \nabla_{e_i}\nu_0, e_j\right \rangle_L\right)_{i,j = 1, \ldots, m_1-1}.$$
\end{defn}
Note that the horizontal second fundamental form is not necessarily symmetric.

\begin{defn}
Let $M \subset \mathbb{G}$ be a smooth hypersurface and denote by $(II_0^{M})^*$ its symmetrized horizontal second fundamental form.  The horizontal principal curvatures $k_1, \ldots, k_{m_1-1}$ of $M$ at a point $x \in M$ are the eigenvalues of  $(II_0^{M})^*$.  The horizontal Gauss curvature $G_0^{M}$ of $M$ at $x$ is $\det\left[(II_0^{M})^*\right]$.  
\end{defn}

For a more detailed development of the horizontal Gauss curvature of hypersurfaces in Carnot groups we refer the interested reader to \cite{cpt} and \cite{cdpt}.
\subsection{Horizontal Convexity in Carnot Groups}
In order to proceed, we need to define an appropriate notion of convexity in Carnot groups.  The theory was first developed by D. Danielli, N. Garofalo, and D.-M Nhieu \cite{dgn}, and independently by G. Lu, J. Manfredi, and B. Stroffolini \cite{lms}, and further studied by L. Capogna, S. Pauls, and J. Tyson in \cite{cpt} and P. Juutinen, G. Lu, J. Manfredi, and B. Stroffolini in \cite{jlms}.    
\begin{defn} \textnormal{(\cite[Definition 5.5]{dgn})}
Let $\G$ be a Carnot group.  A function $u: \G \to \R$ is called weakly H-convex if for any $g \in \G$ and every $\lambda \in [0,1]$ one has 
$$u(g \delta_\lambda (g^{-1}g)) \leq (1-\lambda)u(g) + \lambda u(g') \hspace{5pt} \text{ for every } g' \in H_g\G.$$
\end{defn}
Geometrically this definition gives us that a function $u \in \mathcal{C}^1(\G)$ is weakly H-convex if and only if for every $g \in \mathcal{G}$ the graph of the restriction of $u$ to $H_g\G$ lies above its tangent plane at $\xi_1(g)$ where we have let $g = exp(\xi_1(g) + \cdots + \xi_r(g))$.

Because of the nature of the horizontal Gauss curvature flow problem, we would like a way to describe the weak H-convexity of $u$ in terms of its horizontal Hessian.
\setcounter{thm}{6}
\begin{thm}\textnormal{\cite[Thm. 5.12]{dgn}}
A function $u \in \mathcal{C}^2(\G)$ is weakly H-convex if and only if $(D_0^2u)^*$ is positive semi-definite for every $g \in \G$.
\end{thm}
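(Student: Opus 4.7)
The plan is to reduce the question to one-dimensional convexity along horizontal lines. For each fixed $g \in \G$ and each $v = \sum_{i=1}^{m_1} v_i X_i \in V^1$, observe that points of $H_g\G$ are precisely of the form $g' = g\exp(v)$, so $g^{-1}g' = \exp(v)$ and, using the definition of the dilations, $\delta_\lambda(\exp(v)) = \exp(\lambda v)$. Thus the weak H-convexity inequality in the definition becomes
\begin{equation*}
u(g\exp(\lambda v)) \leq (1-\lambda)u(g) + \lambda u(g\exp(v))
\end{equation*}
for every $\lambda \in [0,1]$. Writing $\phi_{g,v}(s) := u(g\exp(sv))$, weak H-convexity is therefore equivalent to the statement that $\phi_{g,v}$ is convex on every closed interval, for every $g \in \G$ and every $v \in V^1$, and since $g$ is arbitrary this amounts to $\phi_{g,v}$ being convex on $\R$.

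The next step is to compute $\phi_{g,v}''$ in terms of $(D_0^2 u)^*$. Since $X_i$ is left-invariant, the velocity of the curve $s \mapsto g\exp(sv)$ at $s=s_0$ is the left translation of $v$ to $g\exp(s_0v)$, namely $\sum_j v_j X_j\big|_{g\exp(s_0v)}$. Therefore
\begin{equation*}
\phi_{g,v}'(s) = \sum_i v_i (X_i u)(g\exp(sv)),
\end{equation*}
and differentiating once more,
\begin{equation*}
\phi_{g,v}''(s) = \sum_{i,j} v_i v_j (X_j X_i u)(g\exp(sv)) = v^T (D_0^2 u)^*(g\exp(sv))\, v,
\end{equation*}
where the last equality uses the symmetry $v_i v_j = v_j v_i$ to replace $X_j X_i u$ by its symmetrized version. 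This identity is the heart of the argument and is the only place where the $\mathcal{C}^2$ hypothesis really enters.

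With the identity in hand, both implications are immediate. If $u$ is weakly H-convex, then $\phi_{g,v}$ is convex on $\R$ for every $g$ and $v$, so $\phi_{g,v}''(0) \geq 0$, giving $v^T (D_0^2 u)^*(g) v \geq 0$; since $g \in \G$ and $v \in V^1$ were arbitrary, $(D_0^2 u)^*$ is positive semi-definite everywhere. Conversely, if $(D_0^2 u)^*(g) \geq 0$ for every $g$, then the formula for $\phi_{g,v}''$ shows $\phi_{g,v}$ is a $C^2$ function with non-negative second derivative and hence convex, which is exactly weak H-convexity.

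The main technical point to verify carefully is the second derivative formula: one must confirm that the left-invariance of the $X_i$ produces the horizontal Hessian evaluated at the shifted point (rather than some extra lower-order terms coming from the Lie bracket structure on higher layers). This is where the chain rule along the exponential curve must be applied cleanly; everything else is just the one-variable equivalence between convexity and non-negative second derivative.
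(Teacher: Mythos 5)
Your argument is correct: reducing weak H-convexity to one-variable convexity of $s \mapsto u(g\exp(sv))$ along left-translated horizontal one-parameter subgroups, and identifying the second derivative with the quadratic form $v^T(D_0^2u)^*v$ (the antisymmetric bracket part dropping out of the symmetric sum $\sum v_iv_jX_jX_iu$), is exactly the standard proof of this result. The paper itself does not prove the statement but imports it from the cited reference, whose argument proceeds along essentially the same lines as yours.
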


Since we will be working with the graph of $u$ in $\G \times \R$, we will need the following theorem as well:
\begin{thm}\textnormal{\cite[Cor. 4.4]{cpt}}
Let $\G$ be a Carnot group.  A smooth function $u: \G \to \R$ is weakly H-convex if and only if $(II_0^{\mathcal{G}(u)})^*$ is positive semi-definite where $\mathcal{G}(u) = \{(x,s) \in \G \times \R : u(x) - s = 0\}$ is the epigraph of $u$.  Similarly, $u$ is strictly weakly H-convex if and only if $(II_0^{\mathcal{G}(u)})^*$ is positive definite.
\end{thm}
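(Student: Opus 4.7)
The plan is to reduce the statement to Theorem 4.7 by identifying $(II_0^{\mathcal{G}(u)})^{*}$, up to a positive conformal factor and a congruence, with $(D_0^2 u)^{*}$.

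First I would set up the ambient Carnot group $\G\times\R$ with horizontal frame $\{X_1,\ldots,X_{m_1},\partial_s\}$, where $s$ denotes the coordinate on $\R$, and observe that the epigraph $\mathcal{G}(u)$ is the zero set of $F(p,s)=u(p)-s$, whose horizontal gradient $D_0 F=(X_1u,\ldots,X_{m_1}u,-1)$ has nowhere-vanishing last component. Hence $\mathcal{G}(u)$ has no characteristic points, and the unit horizontal normal is
$$\nu_0 \;=\; \frac{1}{\sqrt{1+|D_0u|^2}}\bigl(X_1u,\ldots,X_{m_1}u,-1\bigr).$$
Then I would pick $m_1$ independent horizontal tangent vectors $e_i$ to $\mathcal{G}(u)$, for instance by projecting $X_i$ onto $\nu_0^{\perp}\subset H(\G\times\R)$, so that the resulting $(m_1+1)\times m_1$ matrix $E$ of coefficients (in the ambient horizontal frame) is full rank.

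Next I would compute the entries $\langle\nabla_{e_i}\nu_0,e_j\rangle_L$ by expanding with the Leibniz rule for the Levi-Civita connection of $g_L$. The differentiation of the coefficients of $\nu_0$ along $e_i$ yields entries involving $X_iX_j u$ scaled by $(1+|D_0u|^2)^{-1/2}$; the connection Christoffel contributions, governed by the brackets $[X_k,X_\ell]\in V^2$ and the $g_L$-orthogonality of $V^1$ and $V^2$, are vertical and therefore annihilated upon pairing with the horizontal $e_j$; and the derivative of the normalizing prefactor produces a symmetric rank-one term proportional to $(D_0u)(D_0u)^T$, which can be absorbed into $E$. Symmetrizing in $i,j$ then yields an identity of the schematic form
$$(II_0^{\mathcal{G}(u)})^{*} \;=\; c(p)\, E^{T}\,(D_0^2 u)^{*}\, E,$$
with $c(p)>0$ and $E$ invertible, both depending on $D_0 u(p)$.

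Since congruence by an invertible matrix combined with multiplication by a positive scalar preserves both positive semi-definiteness and positive definiteness, this identity shows that $(II_0^{\mathcal{G}(u)})^{*}$ is positive semi-definite (respectively positive definite) if and only if $(D_0^2 u)^{*}$ is. Invoking Theorem 4.7 in both directions then gives the weakly H-convex case, and replacing ``semi-definite'' by ``definite'' throughout gives the strict version. The main obstacle is the bookkeeping in the connection computation: one must verify that the antisymmetric and vertical-bracket pieces of $\nabla_{e_i}\nu_0$ drop out after pairing with $e_j$ and symmetrizing, leaving only the symmetric part of the horizontal Hessian. The key structural inputs are $[V^1,V^j]\subset V^{j+1}$ and the mutual $g_L$-orthogonality of the layers. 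A built-in consistency check on the form of the displayed identity is provided by equation (1.2): taking determinants of both sides must reproduce $\det((II_0^{\mathcal{G}(u)})^{*})=\det((D_0^2u)^{*})/(1+|D_0u|^2)^{(m_1+1)/2}$, which pins down $c(p)$ and $\det(E^{T}E)$.
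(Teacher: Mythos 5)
The paper does not actually prove this statement --- it is quoted verbatim from \cite[Cor. 4.4]{cpt} --- so your argument has to stand on its own. Your strategy is the natural one and is essentially correct: identify $(II_0^{\mathcal{G}(u)})^*$ with a positive multiple of a congruence of $(D_0^2u)^*$ and then invoke the Hessian characterization of weak H-convexity (the theorem quoted from \cite[Thm. 5.12]{dgn}). In fact your computation is the graph case of the identity $|D_0F|\,(II_0^M)^* = (A^T(D_0^2F)^*A)|_{HM}$ quoted in Section 4.3 from \cite[Proposition 3.13]{cpt}, applied to the defining function $F(x,s)=u(x)-s$ on $\G\times\R$, whose horizontal Hessian is $(D_0^2u)^*$ padded by a zero row and column in the $\partial_s$ slot.

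Three points need tightening. First, the dimension bookkeeping: as written, your identity pairs an $(m_1+1)\times m_1$ matrix $E$ with the $m_1\times m_1$ matrix $(D_0^2u)^*$, which does not typecheck; what the computation actually yields is $(II_0^{\mathcal{G}(u)})^* = (1+|D_0u|^2)^{-1/2}\,E_1^T\,(D_0^2u)^*\,E_1$, where $E_1$ is the top $m_1\times m_1$ block of $E$. For the ``only if'' direction you need $E_1$ to be invertible, not merely $E$ to have full rank (a full-rank $E$ paired with the padded Hessian only controls $(D_0^2u)^*$ on the image of the projection of $HM$); invertibility of $E_1$ does hold, precisely because the $\partial_s$-component of $\nu_0$ equals $-(1+|D_0u|^2)^{-1/2}\neq 0$, so $H\mathcal{G}(u)$ projects isomorphically onto $\mathrm{span}\{X_1,\dots,X_{m_1}\}$. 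Second, the derivative of the normalizing prefactor does not produce a rank-one term to be ``absorbed into $E$'' --- an additive rank-one term cannot in general be absorbed into a congruence --- rather, it vanishes outright, being proportional to $\langle e_j,\nu_0\rangle_L=0$. Third, your consistency check carries the wrong exponent: the correct formula, quoted in Section 4.4 from \cite[Theorem 4.3]{cpt}, is $\det[(II_0^{\mathcal{G}(u)})^*]=\det[(D_0^2u)^*]/(\sqrt{1+|D_0u|^2})^{\,m_1+2}$; the exponent $m_1+1$ appearing in $\eqref{A}$ already absorbs the extra factor $\sqrt{1+|D_0u|^2}$ coming from the normal speed. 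Reassuringly, $\det(E_1)^2=1/(1+|D_0u|^2)$ makes your displayed identity consistent with the $m_1+2$ exponent. None of these affects the soundness of the approach.
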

This gives us several different ways of determining the weak H-convexity of the function $u$.  We would also like to understand the weak H-convexity of a set.  Further, as $u$ yields the description of the surface in terms of level sets, we would like to relate the weak H-convexity of the set given by the epigraph of $u$ with the weak H-convexity of $u$.
\setcounter{thm}{14}
\begin{defn}\textnormal{\cite[Definition 7.1]{dgn}}
A subset $A$ of a Carnot group $\G$ is called weakly H-convex if for any $g \in A$ and every $g' \in A\cap H_g\G$ one has $g_\lambda = g \delta_\lambda (g^{-1} g')  \in A$ for every $\lambda \in [0,1]$.     
\end{defn}
Geometrically, this definition means that $A$ is weakly H-convex if for any $g \in A$, the intersectino of $A$ with the horizontal plane $H_g\G$ is starlike in the Euclidean sense with respect to $g$  at the level of the Lie algebra.  
With this definition, we have the following proposition:
\setcounter{thm}{8}
\begin{prop}\textnormal{\cite[Proposition 7.6]{dgn}}
Let $\G$ be a Carnot group.  Given a weakly H-convex subset $A \subset \G$, a function $u : A \to \R$ is weakly H-convex if and only if $\textnormal{epi } u = \{(x,s) \in \G \times \R : u(x) \leq s\}$  is a weakly H-convex subset of $\G \times \R$.   
\end{prop}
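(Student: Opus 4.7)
The plan is to translate the definitions of weak H-convexity (for a set and for a function) into an explicit statement about horizontal segments in the product Carnot group $\G\times\R$. Equipping $\G\times\R$ with its natural Carnot structure (the $\R$-factor contributes a one-dimensional summand to the first layer), the group law is the direct product $(g,s)(g',s')=(gg',s+s')$, and the non-isotropic dilations act by $\delta_\lambda(g,s)=(\delta_\lambda g,\lambda s)$. A direct calculation then gives
\begin{align*}
H_{(g,s)}(\G\times\R) &= H_g\G\times\R,\\
(g,s)\,\delta_\lambda\!\bigl((g,s)^{-1}(g',s')\bigr) &= \bigl(g\,\delta_\lambda(g^{-1}g'),\,(1-\lambda)s+\lambda s'\bigr).
\end{align*}
That is, the horizontal segment joining $(g,s)$ and $(g',s')$ in $\G\times\R$ (when $g'\in H_g\G$) projects onto the horizontal segment in $\G$ between $g$ and $g'$ in the first coordinate, and onto the Euclidean linear interpolation between $s$ and $s'$ in the second.

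With this identification, the forward direction is immediate. Assume $u$ is weakly H-convex on $A$, take $(g,s),(g',s')\in\textnormal{epi}\,u$ with $(g',s')\in H_{(g,s)}(\G\times\R)$, and note that $g,g'\in A$, $g'\in H_g\G$, $u(g)\leq s$, and $u(g')\leq s'$. Weak H-convexity of $A$ places $g_\lambda=g\,\delta_\lambda(g^{-1}g')$ in $A$, and weak H-convexity of $u$ then yields
$$u(g_\lambda)\leq(1-\lambda)u(g)+\lambda u(g')\leq(1-\lambda)s+\lambda s',$$
so the point $\bigl(g_\lambda,(1-\lambda)s+\lambda s'\bigr)=(g,s)\,\delta_\lambda((g,s)^{-1}(g',s'))$ lies in $\textnormal{epi}\,u$.

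For the reverse direction, assume $\textnormal{epi}\,u$ is weakly H-convex, take $g\in A$ and $g'\in A\cap H_g\G$, and apply the weak H-convexity of $\textnormal{epi}\,u$ to $(g,u(g)),(g',u(g'))\in\textnormal{epi}\,u$; the hypothesis $g'\in H_g\G$ secures that $(g',u(g'))\in H_{(g,u(g))}(\G\times\R)$. The resulting point $\bigl(g_\lambda,(1-\lambda)u(g)+\lambda u(g')\bigr)$ lies in $\textnormal{epi}\,u$, which is exactly the inequality $u(g_\lambda)\leq(1-\lambda)u(g)+\lambda u(g')$ defining weak H-convexity of $u$ (with $g_\lambda\in A$ guaranteed by weak H-convexity of $A$). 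The only genuine substance in this argument is verifying that the horizontal plane and the horizontal dilation–translation in $\G\times\R$ decouple into the analogous objects on $\G$ and the linear structure on $\R$; everything else is a direct unpacking of definitions, so the main care to be taken is in getting these group-theoretic identities right for the product Carnot structure.
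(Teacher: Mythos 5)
Your proof is correct. The paper does not prove this proposition at all --- it is quoted verbatim from Danielli--Garofalo--Nhieu \cite[Proposition 7.6]{dgn} --- so there is no in-paper argument to compare against; your computation of the product group law, dilations, horizontal plane $H_{(g,s)}(\G\times\R)=H_g\G\times\R$, and the twisted segment $(g,s)\,\delta_\lambda((g,s)^{-1}(g',s'))=(g\,\delta_\lambda(g^{-1}g'),(1-\lambda)s+\lambda s')$ is exactly right, and the two directions then follow by unpacking the definitions as you do (the hypothesis that $A$ is weakly H-convex being used precisely to ensure $g_\lambda\in A$ so that $u(g_\lambda)$ is defined). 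This is the standard argument and matches the cited source in substance.
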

\subsection{Evolution of Surfaces in $\G$}

Let $\G$ be a Carnot group.  If $u$ is a smooth function in $x$, for each time $t$ we consider the hypersurface 
$$M_t = \{x \in \G : u(x,t) = 0\}.$$ 

Let $G_0^{M_t}(x)$ denote the horizontal Gauss curvature of $M_t$ at $x$ given by 
$$G_0^{M_t}(x) = \det \left[ (II_0^{M_t})^*(x)\right].$$
Fix $t \geq 0$ and consider $x \in M_t$.  Then the evolution of the point $x$ is given by 
\begin{equation*}
\left\{ \begin{array}{l}
\dot{x}(\sigma) = - G_0^{M_\sigma}(x(\sigma))\nu_0(x(\sigma),\sigma)\\
x(t) = x.
\end{array}
\right.
\end{equation*}
Since $x(\sigma) \in M_\sigma$ for some $\sigma \geq t$ we have 
$$u(x(\sigma), \sigma)= 0.$$
Therefore
$$\left \langle Du(x(\sigma),\sigma),\dot{x}(\sigma) \right \rangle + u_\sigma(x(\sigma),\sigma) = 0,$$
i.e.
$$\left \langle Du(x(\sigma), \sigma), -G_0^{M_\sigma}(x(\sigma))\nu_0(x(\sigma),\sigma)\right \rangle + u_\sigma(x(\sigma),\sigma) = 0$$
where $Du$ is the Euclidean derivative of $u$.  If $X_i = \sum_{j=1}^m a_{i,j}\partial_{x_j}$, we define 
\begin{equation*}
A = \left(\begin{array}{ccc} a_{1,1} & \hdots & a_{1,m} \\ \vdots & \ddots & \vdots \\ a_{m_1,1} & \hdots & a_{m_1,m} \end{array} \right).
\end{equation*}
Then,
\begin{eqnarray*}
\Bigg \langle Du(x(\sigma), \sigma), -G_0^{M_\sigma}(x(\sigma)) \frac{A^TA(Du)}{|A(Du)|} \Bigg \rangle &=&
\frac{-G_0^{M_\sigma}(x(\sigma))}{|A(Du)|} \left\langle A(Du),A(Du)\right \rangle\\
&=& -G_0^{M_\sigma}(x(\sigma))\frac{|A(Du)|^2}{|A(Du)|} \\
&=& -G_0^{M_\sigma}(x(\sigma))|D_0u|.
\end{eqnarray*} 
Therefore,
\begin{eqnarray}\label{usigma}
\nonumber u_\sigma(x(\sigma),\sigma) &=& G_0^{M_\sigma}(x(\sigma))|D_0u|\\
&=& \det \left[(II_0^{M_\sigma})^*\right]|D_0u|
\end{eqnarray}
Now we would like to have an explicit description of $G_0^{M_\sigma}$ in terms of $D_0u$ and $(D_0^2u)^*$.  From \cite{cpt} we have the following proposition.

\begin{prop}\textnormal{(\cite[Proposition 3.13]{cpt})}
Let $u: \G \to \R$ be a smooth function and $M = \{u(x) = 0\}$.  Let $A = [\mathcal{F}_3^\G \to \mathcal{F}_1^\G]$ be the change of basis matrix from $\mathcal{F}_3^\G$ to $\mathcal{F}_1^\G$.  We have the following identity of bilinear forms:
$$|D_0u|(II_0^M)^* = (A^T(D_0^2u)^*A)|_{HM}$$
at non-characteristic points.  Here $HM = TM \cap H\G$ as before.
\end{prop}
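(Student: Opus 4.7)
The approach is to mimic the standard Euclidean computation of the second fundamental form of a level set $\{u=0\}$ in terms of $D^2u/|Du|$, keeping track of which ingredients survive when the anisotropic Riemannian metric $g_L$ is used. I would begin from the definition
\[
(II_0^M)_{ij} = \langle \nabla_{e_i}\nu_0,\,e_j\rangle_L
\]
for $e_i, e_j\in HM$, write $\nu_0 = |D_0u|^{-1}\sum_k (X_k u)\,X_k$, and apply the Leibniz rule for the Levi-Civita connection. This produces three families of contributions: one coming from differentiating the scalar $|D_0u|^{-1}$, one from differentiating the coefficients $X_k u$, and one from the tensorial terms $\nabla_{e_i}X_k$.

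The next step is to show that the first and third families do not contribute. The first family yields a vector proportional to $\sum_k(X_ku)X_k$, i.e.\ parallel to $\nu_0$, which is $g_L$-orthogonal to every $e_j\in HM$, so it drops out. For the third family I would apply Koszul's formula to $\langle \nabla_{X_p}X_k,\,X_l\rangle_L$ with all three indices horizontal: the metric coefficients $\langle X_a,X_b\rangle_L=\delta_{ab}$ are constant, so the three "derivative-of-inner-product" terms vanish; and the three Lie-bracket terms vanish because $[V^1,V^1]\subset V^2\oplus\cdots\oplus V^r$ while $g_L$ makes $V^1$ orthogonal to every higher stratum, forcing $\langle X_a,[X_b,X_c]\rangle_L=0$ for all horizontal indices. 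By $C^\infty$-linearity in $e_i$ and $e_j$, this yields $\langle \nabla_{e_i}X_k,\,e_j\rangle_L=0$ for every horizontal $X_k$.

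What remains is $|D_0u|^{-1}\,e_i(X_ku)\,\langle X_k,e_j\rangle_L$. Expanding $e_i=\sum_l A_{li}X_l$ (only the first $m_1$ columns of $A$ appear since $e_i\in HM$ is horizontal) and reading off $\langle X_k,e_j\rangle_L = A_{kj}$ collapses the sum to $|D_0u|^{-1}(A^T(D_0^2u)A)_{ij}$, with $i,j$ restricted to the $HM$ indices. Symmetrizing in $i,j$ yields the stated identity. The delicate step is the Koszul calculation: it is the joint effect of the $g_L$-orthonormality of the horizontal frame together with the stratification of the Lie algebra that eliminates all Christoffel-type contributions between horizontal vectors, and nothing analogous would hold if one used the original metric $g$ rather than $g_L$.
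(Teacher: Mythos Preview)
The paper does not prove this proposition; it is quoted verbatim from \cite{cpt} and used as input. Your argument is correct and is essentially the computation one would carry out to establish the result: expand $\nu_0$ in the horizontal frame, apply the Leibniz rule, kill the scalar-derivative term by orthogonality of $\nu_0$ to $HM$, kill the connection terms $\langle\nabla_{X_p}X_k,X_l\rangle_L$ via Koszul, and read off the remaining coefficient as $(A^T(D_0^2u)A)|_{HM}$, then symmetrize.

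Two minor remarks. First, your closing comment that ``nothing analogous would hold if one used the original metric $g$ rather than $g_L$'' is not quite right: in this paper $g$ itself is chosen so that the strata are mutually orthogonal and the $X_i$ are orthonormal on $V^1$, and $g_L$ agrees with $g$ on the horizontal layer; hence the Koszul computation for three horizontal vectors gives zero in either metric. The reason $g_L$ appears is simply that $II_0^M$ is \emph{defined} with $\langle\cdot,\cdot\rangle_L$. Second, when you write ``only the first $m_1$ columns of $A$ appear,'' you mean that for the columns of $A$ indexed by $e_1,\dots,e_{m_1-1}\in HM$, only the first $m_1$ \emph{rows} (those indexed by $X_1,\dots,X_{m_1}$) are nonzero; the indexing in your summation is correct, only the verbal description is transposed.
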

Therefore,
$$\det ((II_0^{M_t})^*) = \det \left(\frac{1}{|D_0u|}(D_0^2u)^*|_{HM_t}\right).$$
In order to have a "nicer" expression for the above determinant, we need to better understand the restriction of $(D_0^2u)^*$ to $HM_t$.  
Notice that $(D_0^2u)^*$ is a bilinear form on $H\G|_{M_t}$, a series of standard computations yields

\begin{eqnarray*}
\det\!\!\!\!\!\!\!&&\!\!\!\!\!\!\!\Big(\frac{1}{|D_0u|} (D_0^2u)^*|_{HM_t}\Big) \\&=& \det \left(\frac{1}{|D_0u|} (I_{m_1} - \nu_0\otimes\nu_0)(D_0^2u)^*(I_{m_1} - \nu_0\otimes \nu_0) + \nu_0 \otimes \nu_0\right)
\end{eqnarray*}

Combining this with $\eqref{usigma}$ we obtain the following equation describing the horizontal Gauss curvature flow of the original surface $M_0$:
\begin{equation}\label{gaussgenerallevelset}
u_t = |D_0u|\det \left(\frac{1}{|D_0u|} (I_{m_1} - \nu_0\otimes\nu_0)(D_0^2u)^*(I_{m_1} - \nu_0\otimes \nu_0) + \nu_0 \otimes \nu_0\right).
\end{equation}
\setcounter{thm}{3}
\begin{example} \textbf{Self-similarly shrinking cylinder}
Let $\G$ be a Carnot group of step r and for $R_0 > 0$ let 
\begin{equation}\label{cylinder}
u(x,t) = |x_H|^2 - (-m_1t + R_0^{m_1})^{2/m_1}
\end{equation}
Then the level sets 
$$M_t = \{x : u(x,t) = 0\}$$
$$M_0 = \{x : u(x,0) = 0\}.$$
are products of a sphere in $V^1$ with $V^2 \oplus \cdots \oplus \ V^r$.  
Notice that the function's spatial term only depends on variables from the first layer.  Thus its horizontal Gauss curvature reduces to the Euclidean Gauss curvature in $\R^{m_1}$.  This yields that $u$ is a solution to $\eqref{gaussgenerallevelset}$ away from those points at which $D_0u=0$, known as characteristic points.   However, $M_t$ does not contain any characteristic points.  In particular, we see that 
$$u_t = 2(-m_1t + R_0^{m_1})^{2/m_1 - 1}$$
and 
\begin{eqnarray*}
|D_0u|\det[(II_0^{M_t})^*] &=& 2(-m_1 + R_0^{m_1})^{1/m_1} \cdot \frac{1}{(-m_1 + R_0^{m_1})^{\frac{m_1-1}{m_1}}}\\
&=&2(-m_1t + R_0^{m_1})^{2/m_1 - 1}.
\end{eqnarray*}
Finally, observe that $M_t$ gives a self-similar flow of $$M_0 = \{x : u_0(x) = |x_H|^2 - R_0^2 = 0\}.$$  In particular, $M_t = \delta_{\lambda(t)}M_0$ where $\lambda(t) =\frac{(-m_1t + R_0^{m_1})^{1/m_1}}{R_0}.$
\end{example}
Notice, that $\eqref{gaussgenerallevelset}$ has a singularity whenever $D_0u=0$, i.e. at characteristic points of $M_t$.  Even in the Euclidean setting, in which the equation has the same form except $D_0u$ is replaced by $Du$ and $(D_0^2u)^*$ is replaced by $D^2u$, this singularity poses significant difficulties.  In particular, we first notice that the definition of viscosity solution as it is stated makes no sense when $D_0u = 0$.  In the Euclidean setting, an extended definition of viscosity solution is used (see \cite[Section 2.1.3]{Giga}, \cite{ish:sou}, \cite{im:level}).  However, for Carnot group is it still unclear what the appropriate extension should be.  Because of this, we will restrict ourselves to the case when $M_t$ is guaranteed to have no characteristic points, i.e. when $M_t$ is a graph.  
\subsection{Evolution of Graphs in $\G \times \R$}
Consider the Carnot group $\mathbb{G} \times \R$ with coordinates $(x,s)$, $x \in \mathbb{G}$, $s \in \R$.  On the level of the Lie algebra, this corresponds to adding a single vector field, denoted $S = \frac{\partial}{\partial s}$, to the first layer of the grading.  If $u$ is a smooth function in $x$, for each time $t$ we consider the graph: 
$$\mathcal{G}_t(u)  =\{(x,s) \in \mathbb{G} \times \R : u(x,t) - s = 0\}.$$
The unit horizontal normal $\nu_0$ to $\mathcal{G}_t(u)$ is given by 
\begin{equation}\label{nu}
\nu_0 = \frac{D_0 u - S}{\sqrt{1 + |D_0u|^2}}.
\end{equation}
It is given in \cite[Theorem 4.3]{cpt} that the horizontal Gauss curvature of $\mathcal{G}_t(u)$ is given by 
$$\det\left[(II_0^{\mathcal{G}_t(u)})^*\right]= \frac{\det \left[ (D_0^2u)^*\right]}{\left(\sqrt{1+ |D_0u|^2}\right)^{m_1+2}}.$$
Following the same development as in the previous subsection we obtain
$$u_t = \sqrt{1 + |D_0u|^2}\det \left[(II_0^{\mathcal{G}_t(u)})^*\right] $$
Thus the equation describing the horizontal Gauss curvature flow of the graph of $u$ is given by 
\begin{equation}\label{Gauss}
u_t = \frac{\det((D_0^2u)^*)}{(\sqrt{1+|D_0u|^2})^{m_1+1}}.
\end{equation}
In order to apply Theorem \ref{comparisongauss} to $\eqref{Gauss}$, it is necessary that 
$$F(D_0u,(D_0^2u)^*) = -\frac{\det((D_0^2u)^*)}{\left(\sqrt{1+|D_0u|^2}\right)^{m_1+1}}$$ be continuous, degenerate elliptic and satisfy the property that if $u(x,t)$ is a viscosity subsolution then $\mu u(x,\theta t )$ is a viscosity subsolution for $\theta \in (0,1)$ and $\mu \in (0,1)$ satisfying $\theta \mu^{-(m_1 - 1)} \leq 1$.  Since the continuity of $F$ is clear, our first problem is degenerate ellipticity.  However, equation $\eqref{Gauss}$ does not satisfy this condition in general.  To remedy this, we introduce a new problem for which degenerate ellipticity does hold.  \\
For any $X \in S^{m_1}(\R)$, define
$$\text{det}_+ X = \prod_{i=1}^{m_1} \max\{\lambda_i,0\}$$
where $\{\lambda_i\}$ denotes the eigenvalues of $X$.  We then redefine the function
$$F(D_0u,(D_0^2u)^*) = -\frac{\text{det}_+((D_0^2u)^*)}{\left(\sqrt{1+|D_0u|^2}\right)^{m_1+1}}$$ and consider the problem
\begin{equation}\label{Gaussplus}
u_t = \frac{\text{det}_+((D_0^2u)^*)}{\left(\sqrt{1+|D_0u|^2}\right)^{m_1+1}}
\end{equation}
Recall that in the Euclidean setting, the Gauss curvature flow preserves the strict convexity of the original surface.  This yields the equivalence of the modified and original problems as long as the original surface is strictly convex.  In general, the proof relies heavily on the comparison principle as we must obtain information concerning $u(x,t)$ for $t > 0$ from $u(x,0)$.  For the setting of Carnot groups, we begin with the corresponding theorem pertaining to functions $u: \Omega \times [0,T) \to \R$ where $\Omega \subset \G$ is bounded.
\setcounter{thm}{10}
\begin{thm}\label{convexitybounded}
Let $\G$ be a Carnot group.  Suppose $u: \Omega \times [0,T) \to \R$, where $\Omega \subset \G$ is bounded, is a smooth solution to 
\begin{equation}
\left\{ \begin{array}{lr} 
u_t = \frac{\det_+ ((D_0^2u)^*)}{(\sqrt{1+|D_0u|^2})^{m_1+1}} & (*)\\
u(x,0) = u_0(x)\text{ on } \overline{\Omega}&\\
u(x,t) = g(x,t) \text{ on } \partial \Omega \times [0,T)&
\end{array}\right.
\end{equation}
where $g(x,t)$ is such that $g_t(x,t) \geq \delta > 0$ for all $(x,t) \in \partial \Omega \times [0,T)$ and $u_0(x)$ is strictly weakly H-convex.  Then $u(x,t)$ is strictly weakly H-convex for each fixed $t \geq 0$.
\end{thm}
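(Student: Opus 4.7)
The plan is to argue by contradiction using a first-time-of-contact argument on the smallest eigenvalue of $(D_0^2u)^*$, combined with a horizontal maximum principle. By Theorem 4.7, strict weak H-convexity of $u(\cdot,t)$ is equivalent to positive definiteness of the symmetric matrix $(D_0^2u(\cdot,t))^*$, so it suffices to show that $(D_0^2 u(\cdot,t))^*$ stays positive definite on $\overline{\Omega}$ for all $t \in [0,T)$.

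First I would use the boundary condition to control convexity on $\partial\Omega$: on $\partial\Omega \times [0,T)$ the PDE $(*)$ combined with $u_t = g_t \geq \delta > 0$ yields
$$\text{det}_+\!\left((D_0^2 u)^*\right) \;\geq\; \delta\bigl(\sqrt{1+|D_0u|^2}\bigr)^{m_1+1} > 0.$$
Since $\text{det}_+$ is the product of the positive parts of the eigenvalues, strict positivity forces every eigenvalue of $(D_0^2 u)^*$ to be strictly positive. Hence strict weak H-convexity already holds on the parabolic boundary.

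Next, set $T^* = \sup\{t \in [0,T) : (D_0^2 u(\cdot,s))^* > 0 \text{ on } \overline{\Omega} \text{ for all } s \in [0,t]\}$. Since $u_0$ is strictly weakly H-convex on $\overline{\Omega}$ and $u$ is smooth, $T^* > 0$. Assuming $T^* < T$ for contradiction, continuity of the eigenvalues together with the boundary bound above produces an interior point $p_0 \in \Omega$ at which the smallest eigenvalue of $(D_0^2 u(p_0, T^*))^*$ vanishes. Choosing a unit eigenvector $\xi_0 \in \R^{m_1}$ for this zero eigenvalue and defining $\phi(p,t) = \langle \xi_0, (D_0^2 u(p,t))^* \xi_0\rangle$, I obtain $\phi \geq 0$ on $\overline{\Omega} \times [0,T^*]$ with $\phi(p_0, T^*) = 0$, so $(p_0, T^*)$ is a minimum of $\phi$. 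This gives $\phi_t(p_0,T^*) \leq 0$, $D_0\phi(p_0,T^*) = 0$, and $(D_0^2 \phi(p_0,T^*))^* \geq 0$.

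The heart of the proof is then to derive an evolution inequality for $\phi$ that contradicts these sign conditions. In a neighborhood of $(p_0,T^*)$ all other eigenvalues remain bounded away from zero, so $\text{det}_+((D_0^2u)^*) = \det((D_0^2 u)^*)$ is a smooth function of $(D_0^2u)^*$. Differentiating the PDE twice along the horizontal direction $\xi_0$ and symmetrizing then yields an equation of the form $\phi_t = \mathcal{L}\phi + R$, where $\mathcal{L}$ is a linear degenerate elliptic operator with coefficients depending smoothly on $D_0u$ and $(D_0^2u)^*$ (the linearization of the right-hand side of $(*)$), and $R$ collects lower-order terms. Evaluated at $(p_0,T^*)$ one has $\mathcal{L}\phi \geq 0$ and $\phi_t \leq 0$, so the desired contradiction follows once $R(p_0,T^*)$ is controlled. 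The main obstacle is precisely the non-commutativity of the horizontal vector fields: because each $[X_i,X_j]$ is a vector field in $V^2$, expressing $(D_0^2 u_t)^*$ in terms of $\phi$ and its horizontal derivatives produces bracket terms involving derivatives of $u$ along the second layer, which have no Euclidean analogue. Smoothness of $u$ and boundedness of $\Omega$ keep these bracket contributions bounded, and a careful accounting (following the Riemannian-approximation computations used in \cite{CC} and \cite{hp:mean}) is needed to verify that the residual terms in $R$ do not spoil the sign at $(p_0,T^*)$, thereby completing the contradiction.
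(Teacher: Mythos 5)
Your proposal takes a genuinely different route from the paper, and the route you chose contains a gap at exactly the point where all the difficulty of the theorem is concentrated. The paper's proof is a scalar argument: it differentiates $(*)$ in time, observes that $v=u_t$ satisfies a \emph{linear} degenerate parabolic equation whose coefficient matrix $((D_0^2u)^*)^{-1}\det_+((D_0^2u)^*)$ is positive semi-definite (degenerate ellipticity via Fejer's theorem), checks $v\geq\delta>0$ on the parabolic boundary (this is where $g_t\geq\delta$ and the strict weak H-convexity of $u_0$ enter, just as in your boundary step), and then invokes the comparison principle of Corollary \ref{comparisongaussbounded} to get $u_t>0$ in the interior. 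Since $u_t=\det_+((D_0^2u)^*)/(\sqrt{1+|D_0u|^2})^{m_1+1}$, positivity of $u_t$ immediately forces \emph{every} eigenvalue of $(D_0^2u)^*$ to be positive, which is the conclusion. No tensor maximum principle and no second spatial differentiation of the equation are needed.

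Your first-contact argument on $\phi(p,t)=\langle\xi_0,(D_0^2u(p,t))^*\xi_0\rangle$ is the classical tensor-maximum-principle strategy, but as written it is incomplete in two ways. First, the heart of the argument --- the evolution inequality $\phi_t=\mathcal{L}\phi+R$ with $R(p_0,T^*)\geq 0$ --- is asserted rather than proved; you explicitly defer the ``careful accounting'' of the residual. This is not a routine verification: even in the Euclidean Gauss curvature flow, differentiating $\det(D^2u)$ twice in a fixed direction produces the quadratic term $\sum F^{ij,kl}\,\partial_{\xi_0}u_{ij}\,\partial_{\xi_0}u_{kl}$, and $\det$ is \emph{not} concave on positive definite matrices, so this term has no favorable sign without passing to $\log\det$ or $\det^{1/m_1}$ or invoking a constant-rank theorem; in the Carnot setting one additionally has the commutator terms you mention, involving derivatives of $u$ along $V^2$, which are merely ``bounded'' --- and bounded is not enough, since you need a sign, not a size estimate, at the contact point. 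Second, your claim that near $(p_0,T^*)$ ``all other eigenvalues remain bounded away from zero'' is unjustified: several eigenvalues of $(D_0^2u(p_0,T^*))^*$ could vanish simultaneously at the first contact time, and for $t$ slightly larger than $T^*$ the matrix need not be positive semi-definite, so the identification $\det_+=\det$ (and hence smoothness of the right-hand side of $(*)$ as a function of the Hessian) is not available on a full space-time neighborhood where you want to differentiate the equation. I would recommend abandoning the eigenvalue-contact approach here and instead running the paper's argument: linearize in $t$ and apply the comparison principle already established for bounded domains to $v=u_t$.
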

\begin{proof}
For this proof, we will follow P. Marcati and M. Molinari \cite[Lemma 2.4]{marcati}.  First we differentiate (*) with respect to $t$.  Using Jacobi's formula for the derivative of a determinant we get
\begin{eqnarray*}
u_{tt} &=& \frac{-(m_1+1)\text{det}_+((D_0^2u)^*)}{(1+|D_0u|^2)^{\frac{m_1+1}{2}+1}} \left(\sum_{i=1}^{m_1}X_iuX_iu_t\right) \\&&+ \frac{\det_+ ((D_0^2u)^*)}{(\sqrt{1+|D_0u|^2})^{m_1+1}} \sum_{i,j=1}^{m_1} \left[((D_0^2u)^*)^{-1}\right]_{ji} \left[(D_0^2u_t)^*\right]_{ij}.
\end{eqnarray*}
Letting $u_t = v$ we rewrite the above equation.
\begin{eqnarray}\label{veqn}
\nonumber v_{t} &=& \frac{-(m_1+1)\text{det}_+((D_0^2u)^*)}{(1+|D_0u|^2)^{\frac{m_1+1}{2}+1}} \left(\sum_{i=1}^{m_1}X_iuX_iv\right) \\&&+ \frac{\det_+ ((D_0^2u)^*)}{(\sqrt{1+|D_0u|^2})^{m_1+1}}  \sum_{i,j=1}^{m_1} \left[((D_0^2u)^*)^{-1}\right]_{ji} \left[(D_0^2v)^*\right]_{ij}.
\end{eqnarray}
Our goal is to apply the viscosity theory of the previous section to 
\begin{eqnarray*}
\tilde{F}(\eta, M) &=& \frac{(m_1+1)\text{det}_+((D_0^2u)^*)}{(1+|D_0u|^2)^{\frac{m_1+1}{2}+1}} \left(\sum_{i=1}^{m_1}X_iu\cdot\eta_i\right) \\&&- \frac{\det_+ ((D_0^2u)^*)}{(\sqrt{1+|D_0u|^2})^{m_1+1}}  \sum_{i,j=1}^{m_1} \left[((D_0^2u)^*)^{-1}\right]_{ji} M_{ij}.
\end{eqnarray*}
In order to do so, we must have that $\tilde{F}$ is degenerate elliptic.  Let $M, N \in S^{m_1}(\R)$ such that $M \leq N$.  Notice that this is equivalent to requiring that $N - M$ is positive semi-definite.  We want to show that $\tilde{F}(\xi, N) \leq \tilde{F}(\xi,M)$.  From $\eqref{veqn}$, 
\begin{eqnarray*}
\tilde{F}(\xi, M) - \tilde{F}(\xi, N) &=& \frac{\det_+ ((D_0^2u)^*)}{(\sqrt{1+|D_0u|^2})^{m_1+1}}  
\cdot \sum_{i,j=1}^{m_1}\left[\left(((D_0^2u)^*)^{-1}\right)^T\right]_{ij}(N-M)_{ij}
\end{eqnarray*}
Recall Fejer's theorem \cite[Corollary 7.5.4]{matrixanalysis}: $\sum_{i,j=1}^{m_1}\left[((D_0^2u)^*)^{-1}\right]_{ji}(N-M)_{ij} \geq 0$ for any positive semi-definite $(N-M)$ if and only if $\left(((D_0^2u)^*)^{-1}\right)^T$ is positive semi-definite.  Noticing that $\tilde{F}(\xi, M) - \tilde{F}(\xi, N) =0$ unless $(D_0^2u)^*$ is positive semi-definite and that $\left(((D_0^2u)^*)^{-1}\right)^T$ is positive semi-definite whenever $(D_0^2u)^*$ is, Fejer's theorem yields $\tilde{F}(\xi, N) \leq \tilde{F}(\xi, M)$ as desired.  

Since $u_0$ is strictly weakly H-convex by assumption, we have that the eigenvalues of $(II_0^{\mathcal{G}_0(u_0)})^*$ are strictly positive.  Therefore,
$$v(x,0) = u_t(x,0) = \sqrt{1+|D_0u|^2} \det((II_0^{\mathcal{G}_0(u_0)})^*) \geq \delta > 0.$$
Further, by our hypothesis, 
$$v(x,t) = u_t(x,t) = g_t(x,t) \geq \delta > 0 \text{ for } 0 \leq t < T, \hspace{3pt} x \in \partial \Omega.$$  
Combining these, $v(x,t) \geq \delta > 0$ on $\left(\partial \Omega \times [0,T)\right) \cup \left(\overline{\Omega} \times \{0\}\right)$.  Therefore, by Corollary \ref{comparisongaussbounded}, $$0 < v(x,t) = u_t(x,t) = \sqrt{1 + |D_0u|^2} \det ((II_0^{\mathcal{G}_t(u)})^*)$$ for $(x,t) \in \Omega \times [0,T)$.  Finally, by the continuity of the eigenvalues, we have that the eigenvalues of $(II_0^{\mathcal{G}_t(u)})^*$ are strictly positive and that $u(x,t)$ is strictly weakly H-convex for all $t$.      
\end{proof}   
In order to extend this proof to $u: \G \times [0,T) \to \R$, we immediately see that the comparison principle in this case relies on the existence of the function $h_0$ described in Theorem \ref{comparisongauss}.   Because of this, we have the following theorem concerning the preservation of convexity for unbounded domains.

\begin{thm}\label{convexityunbounded}
 Suppose that $u$ is a smooth solution to 
\begin{equation*}
 \left\{ \begin{array}{lr}
 u_t = \frac{\det_+((D_0^2u)^*)}{(\sqrt{1+|D_0u|^2})^{m_1+1}} & (*) \\
 u(x,0) = u_0(x)
 \end{array} \right.
 \end{equation*} such that $u_0$ is strictly weakly H-convex and 
 \begin{eqnarray*}
\!\!\!\!\!&&\!\!\!\!\!\frac{-(m_1+1)}{1+|D_0u_0|}G(D_0u_0, (D_0^2u_0)^*) \left(\sum_{i=1}^{m_1}X_iu_0X_iG(D_0u_0, (D_0^2u_0)^*)\right) \\&+& G(D_0u_0, (D_0^2u_0)^*) \sum_{i,j=1}^{m_1} \left[((D_0^2u_0)^*)^{-1}\right]_{ji} \left[\left(D_0^2G(D_0u_0, (D_0^2u_0)^*)\right)^*\right]_{ij}
\geq 0 \end{eqnarray*}
where $G(D_0u_0, (D_0^2u_0)^*) = \frac{\det_+((D_0^2u_0)^*)}{(\sqrt{1+|D_0u_0|^2})^{m_1+1}}$.  
Further suppose that there exists \\$h_0(x) \in C(\G)$ such that $h_0(x) \geq \epsilon_0 |x|_g^{2r!}$ for all $x \in \G$ and for some $\epsilon_0 > 0$ and that 
$$\sup_{(x,t) \in \G \times [0,T]} |u_t(x,t) - h_0(x)| < \infty.$$  
Then $u$ is strictly weakly H-convex for all $t$.  
\end{thm}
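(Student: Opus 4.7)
The plan is to mirror the bounded-domain proof (Theorem \ref{convexitybounded}), replacing Corollary \ref{comparisongaussbounded} with the unbounded comparison principle Theorem \ref{comparisongauss}; the extra hypotheses in the statement are precisely those needed to make the latter applicable. First I set $v = u_t$ and differentiate $(*)$ with respect to $t$ using Jacobi's formula, obtaining the linear parabolic equation
$$v_t + \tilde F(D_0 v, (D_0^2 v)^*) = 0,$$
whose coefficients depend smoothly on $u$. The degenerate ellipticity of $\tilde F$ is verified as in the proof of Theorem \ref{convexitybounded} via Fejer's theorem, and depends crucially on the use of $\det_+$ rather than $\det$, which forces $\tilde F$ to vanish outside the region where $(D_0^2 u)^*$ is positive semi-definite.

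Second, I apply Theorem \ref{comparisongauss} to the linearized equation. The hypothesis $\sup_{\G \times [0,T]} |u_t - h_0| < \infty$ gives exactly the growth bound \eqref{h0bound1} with both $u$ and $v$ there replaced by our $v = u_t$. Because $\tilde F$ is linear in $(\eta, M)$, the scaling map $v \mapsto \mu v(x, \theta t)$ preserves the subsolution inequality up to an error term $\mu(\theta - 1) v_t$, which has the required sign as soon as $v_t \geq 0$. Comparing $v$ with the trivial solution $0$ of the linear equation then yields $v \geq 0$ on $\G \times [0,\infty)$, provided $v_t \geq 0$.

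To obtain the required $v_t \geq 0$, I bootstrap: setting $w := v_t$ and differentiating the equation for $v$ once more in $t$ (legitimate since $u$ is smooth), I obtain a linear parabolic equation for $w$ with smooth coefficients depending on $u$. The assumed initial inequality in the statement is precisely $w(x, 0) \geq 0$, a growth bound for $w = u_{tt}$ is inherited from the bound on $u_t - h_0$ via the smoothness of $u$ and of the coefficients, and the scaling hypothesis is trivially satisfied at this level because $0$ solves the equation for $w$. Theorem \ref{comparisongauss} therefore yields $w \geq 0$ on $\G \times [0,\infty)$. Returning to $v$, we now have $v(x, t) \geq v(x, 0) > 0$ for all $(x, t) \in \G \times [0, \infty)$, where strict inequality on the initial slice follows from the strict weak H-convexity of $u_0$. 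Positivity of $v$ forces $\det_+((D_0^2 u)^*) > 0$, so every eigenvalue of $(D_0^2 u(\cdot, t))^*$ is strictly positive and $u(\cdot, t)$ is strictly weakly H-convex for every $t \geq 0$, as desired.

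The main obstacle is the circular-looking interaction between the scaling hypothesis of Theorem \ref{comparisongauss} and the sign of $v_t$: this forces the proof to be run at two linearization levels, first on $w = v_t$ (where scaling is automatic because $0$ solves the corresponding equation) and then on $v$. A secondary technical point is propagating the $h_0$-type growth bound from $u_t$ to $u_{tt}$, which requires tracking the smooth dependence of the coefficients of the linear equation for $v$ on $u$ and its horizontal derivatives.
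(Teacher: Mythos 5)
Your linearization, the Fejer-theorem verification of degenerate ellipticity, and the overall strategy of applying the unbounded comparison principle to $v=u_t$ all match the paper. But the way you run the comparison has a decisive flaw: you propose to compare $v$ (and later $w=v_t$) against the trivial solution $0$. Theorem \ref{comparisongauss} requires \emph{both} the subsolution and the supersolution to lie within bounded distance of $h_0$, and $h_0 \geq \epsilon_0|x|_g^{2r!}$ is unbounded, so the zero function can never play the role of the other comparison function --- $\sup_{\G}|0-h_0(x)| = \infty$. This is exactly why the paper does not compare with $0$: it instead takes the \emph{stationary} function $\varphi(x,t):=v(x,0)=u_t(x,0)$, which is strictly positive by the strict weak H-convexity of $u_0$ and satisfies the growth bound relative to $h_0$ by hypothesis. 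The displayed inequality in the theorem statement is then used for precisely one thing: it says $u_{tt}(x,0)=-\tilde F(D_0 v,(D_0^2v)^*)\big|_{t=0}\geq 0$, which (since $\varphi_t\equiv 0$) is the statement that $\varphi$ is a subsolution of the linearized equation. One application of Theorem \ref{comparisongauss} to the pair $(\varphi, v)$ then gives $0<\varphi\leq v=u_t$ everywhere, and positivity of $u_t$ forces all horizontal principal curvatures to stay positive.

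Your bootstrap to $w=u_{tt}$ has additional problems beyond the $h_0$ issue. Differentiating the $v$-equation in $t$ does not produce a homogeneous equation of the form $w_t+\tilde F(D_0w,(D_0^2w)^*)=0$: the coefficients of $\tilde F$ depend on $u(x,t)$, so their $t$-derivatives generate source terms involving $v$, $D_0v$, and $(D_0^2v)^*$. Consequently $0$ is generally \emph{not} a solution of the $w$-equation, and your claim that the scaling hypothesis is ``trivially satisfied because $0$ solves the equation'' is a non sequitur in any case --- that hypothesis concerns whether $\mu w(x,\theta t)$ remains a subsolution, not whether $0$ is a solution. Finally, the hypotheses of the theorem give a growth bound on $u_t-h_0$ but say nothing about $u_{tt}-h_0$; indeed, if $u_t$ stays within bounded distance of the time-independent $h_0$, one expects $u_{tt}$ to be near $0$, not near $h_0$, so the growth condition needed to run Theorem \ref{comparisongauss} at the $w$ level is unavailable. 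The fix is to abandon the two-level bootstrap entirely and use the paper's single comparison against $\varphi(x,t)=u_t(x,0)$.
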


\begin{proof}
We will again follow the idea of the proof Marcati and Molinari \cite[Lemma 2.4]{marcati}.  Differentiating (*) with respect to time and letting $v = u_t$ we have:
\begin{eqnarray}\label{veqn2}
\nonumber v_{t} &=& \frac{-(m_1+1)\det_+((D_0^2u)^*)}{(1+|D_0u|^2)^{\frac{m_1+1}{2}+1}} \left(\sum_{i=1}^{m_1}X_iuX_iv\right) \\&&+ \frac{\det_+ ((D_0^2u)^*)}{(\sqrt{1+|D_0u|^2})^{m_1+1}}  \sum_{i,j=1}^{m_1} \left[((D_0^2u)^*)^{-1}\right]_{ji} \left[(D_0^2v)^*\right]_{ij}
\end{eqnarray}
As in Theorem \ref{convexitybounded}, we can show that 
\begin{eqnarray*}
\tilde{F}(\eta, M) &=& \frac{(m_1+1)\det_+((D_0^2u)^*)}{(1+|D_0u|^2)^{\frac{m_1+1}{2}+1}} \left(\sum_{i=1}^{m_1}X_iu\cdot\eta_i\right) \\&&- \frac{\det_+ ((D_0^2u)^*)}{(\sqrt{1+|D_0u|^2})^{m_1+1}}  \sum_{i,j=1}^{m_1} \left[((D_0^2u)^*)^{-1}\right]_{ji} M_{ij}.
\end{eqnarray*}
is degenerate elliptic.  Further, if $v$ is a subsolution to $\eqref{veqn2}$, so is $\mu v(x,\theta t)$ for all $\mu,\theta \in (0,1)$.  Since $u_0$ is strictly weakly H-convex by assumption, we have that the eigenvalues of $(II_0^{\mathcal{G}_0(u_0)})^*$ are strictly positive. Therefore,
$$v(x,0) = u_t(x,0) = \sqrt{1+|D_0u_0|^2} \det ((II_0^{\mathcal{G}_0(u_0)})^*) \geq \delta > 0.$$
Define $\varphi(x,t) :=v(x,0)$.  Therefore, $\varphi(x,0) = v(x,0)$ and $\varphi(x,t) > 0$ for all $x,t$.  Further, by our hypothesis,
\begin{eqnarray*}
-\tilde{F}(D_0\varphi, (D_0^2\varphi)^*) &=& -\tilde{F}(D_0v, (D_0^2v)^*)|_{t=0}\\
&=& v_t(x,0) \\
&=& u_{tt}(x,0) \\
&=& \frac{-(m_1+1)\det_+((D_0^2u_0)^*)}{(1+|D_0u_0|^2)^{\frac{m_1+1}{2}+1}}\\
&& \cdot \left(\sum_{i=1}^{m_1}X_iu_0X_i\left(\frac{\det_+((D_0^2u_0)^*)}{(\sqrt{1+|D_0u_0|^2})^{m_1+1}}\right)\right) \\&+& \frac{\det_+ ((D_0^2u_0)^*)}{(\sqrt{1+|D_0u_0|^2})^{m_1+1}} \\&&\cdot \sum_{i,j=1}^{m_1} \left[((D_0^2u_0)^*)^{-1}\right]_{ji} \left[\left(D_0^2\left(\frac{\det_+((D_0^2u_0)^*)}{(\sqrt{1+|D_0u_0|^2})^{m_1+1}}\right)\right)^*\right]_{ij}\\
&\geq& 0.
\end{eqnarray*}
Therefore $\varphi$ is a subsolution.  Also by our hypothesis,
$$\sup_{(x,t) \in \G \times [0,T]} \left(|\varphi(x,t) - h_0(x)| + |v(x,t) - h_0(x)|\right) < \infty.$$
By Theorem \ref{comparisongauss}
$$0 < \varphi(x,t) \leq v(x,t) = u_t(x,t) = \sqrt{1 + |D_0u|^2} \det ((II_0^{\mathcal{G}_t(u)})^*)$$ for $(x,t) \in \Omega \times [0,T)$.  Finally, by the continuity of the eigenvalues, we have that the eigenvalues of $(II_0^{\mathcal{G}_t(u)})^*$ are strictly positive and that $u(x,t)$ is strictly weakly H-convex for all $t$.      
\end{proof}
Using either Theorem \ref{convexitybounded} for bounded domains or Theorem \ref{convexityunbounded} for unbounded domains, each with the appropriate hypotheses, we have that the modified problem is equivalent to the original horizontal Gauss curvature flow problem whenever $u_0$ is strictly weakly H-convex.  Thus in this situation it makes sense to apply our viscosity theory to the modified problem which possesses the degenerate elliptic property we desire.  

Finally, it is an easy computation to see that if $u(x,t)$ is a viscosity subsolution to the modified problem, then so is $\mu u(x,\theta t)$
for $\theta \in (0,1)$ and $\mu \in (0,1)$ satisfying $\theta \mu^{-(m_1 - 1)} \leq 1$.  Then using Perron's Method and the comparison principle, we have the following theorems concerning the existence and uniqueness of continuous viscosity solutions.  
\begin{thm}\label{gaussflowexist}
Let $\G$ be a Carnot group.  Let $h \in \mathcal{C}(\mathbb{G})$ be such that 
$$\sup_\mathbb{G}| h(x) - h_0(x)| < \infty$$
and for each $\epsilon \in (0,1)$ there exists a constant $B_\epsilon > 0$ such that 
$$|h(x) -h(\xi)| \leq \epsilon + B_\epsilon h_0(\xi^{-1}x)$$
where $h_0 \in \mathcal{C}^2$ is as in Theorem \ref{comparisongauss} and satisfies 
$$C \geq \frac{\det_+((D_0^2h_0)^*)}{\left(\sqrt{1+|D_0h_0|^2}\right)^{m_1+1}}$$
for some constant $C > 0$ and $h_0(0) = 0$.  Then there is a viscosity solution $u \in C(\mathbb{G} \times [0,\infty))$ of 
\begin{equation}\label{generalinitial}
\left\{ \begin{array}{ll}
u_t = \frac{\det_+((D_0^2u)^*)}{\left(\sqrt{1+|D_0u|^2}\right)^{m_1+1}} & \text{ in } \mathbb{G} \times (0,\infty)\\
u(x,0) = h(x) & \text{ for } x \in \mathbb{G}
\end{array} \right.
\end{equation}
\end{thm}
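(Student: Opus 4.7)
The plan is to combine Perron's method (Theorem~\ref{existence}) with the comparison principle of Theorem~\ref{comparisongauss} via explicit global barriers tailored to the hypotheses on~$h$. For each $\xi\in\mathbb{G}$ and each $\epsilon\in(0,1)$, after enlarging $B_\epsilon$ if necessary so that $B_\epsilon\geq 1$, I introduce the candidate barriers
\begin{equation*}
\Phi^\pm_{\xi,\epsilon}(x,t) \;=\; h(\xi)\pm\epsilon\pm B_\epsilon\, h_0(\xi^{-1}x)\pm B_\epsilon^{m_1}C\,t.
\end{equation*}
Left-invariance of the horizontal vector fields gives $D_0\Phi^\pm(x,t)=\pm B_\epsilon (D_0h_0)(\xi^{-1}x)$ and $(D_0^2\Phi^\pm)^*(x,t)=\pm B_\epsilon\,((D_0^2h_0)^*)(\xi^{-1}x)$. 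The inequality $\sqrt{1+B_\epsilon^2|D_0h_0|^2}\geq\sqrt{1+|D_0h_0|^2}$ (which uses $B_\epsilon\geq 1$) together with the $\text{det}_+$-hypothesis on $h_0$ yields $\partial_t\Phi^+=B_\epsilon^{m_1}C\geq \text{det}_+((D_0^2\Phi^+)^*)/(\sqrt{1+|D_0\Phi^+|^2})^{m_1+1}$, so $\Phi^+$ is a classical supersolution; that $\Phi^-$ is a classical subsolution is immediate because the right-hand side of the PDE is nonnegative while $\partial_t\Phi^-<0$.

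I would then set $u^+:=\inf_{\xi,\epsilon}\Phi^+_{\xi,\epsilon}$ and $u^-:=\sup_{\xi,\epsilon}\Phi^-_{\xi,\epsilon}$. Standard stability of viscosity solutions under envelopes shows that $u^+$ (after passing to its lower semicontinuous envelope) is a viscosity supersolution and $u^-$ (after passing to its upper semicontinuous envelope) is a viscosity subsolution of the Gauss curvature flow equation on $\mathbb{G}\times(0,\infty)$. Using $h_0(0)=0$ and evaluating at $\xi=x$ gives $\Phi^\pm_{x,\epsilon}(x,0)=h(x)\pm\epsilon$, so $u^+(x,0)\leq h(x)$ and $u^-(x,0)\geq h(x)$; the hypothesis $|h(x)-h(\xi)|\leq\epsilon+B_\epsilon h_0(\xi^{-1}x)$ provides the reverse inequalities, so $u^\pm(x,0)=h(x)$. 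In particular $u^-\leq u^+$ on $\mathbb{G}\times[0,\infty)$, and Perron's method (Theorem~\ref{existence}) produces a viscosity solution $u$ of~\eqref{generalinitial} with $u^-\leq u\leq u^+$.

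The main obstacle is upgrading this Perron solution to a continuous one. From the sandwich and the explicit barriers at $\xi=x$ we have $|u(x,t)-h(x)|\leq\epsilon_0+B_{\epsilon_0}^{m_1}CT$ on $\mathbb{G}\times[0,T]$ for any fixed $\epsilon_0>0$, which together with $\sup_\mathbb{G}|h-h_0|<\infty$ verifies the growth hypothesis $\sup_{\mathbb{G}\times[0,T]}|u-h_0|<\infty$ of Theorem~\ref{comparisongauss}. Applying part~(i) of that theorem to $u^*$ (a subsolution) and $u_*$ (a supersolution), both of which inherit the growth bound from the sandwich and satisfy $u^*(\cdot,0)=u_*(\cdot,0)=h$, yields $u^*(x,\theta t)\leq u_*(x,t)$ for every $\theta\in(0,1)$. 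The delicate final step is sending $\theta\to 1^-$: this is done by adapting the argument of~\cite{im:level} to the Carnot-group setting, using the scaling $u(x,t)\mapsto \mu u(x,\theta t)$ preserved by the PDE (as exploited in Theorem~\ref{comparisongauss}) together with the barrier control near $t=0$ to propagate time-regularity to all $t>0$. Combined with the always-valid inequality $u^*\geq u_*$, this gives $u^*=u_*=u\in C(\mathbb{G}\times[0,\infty))$, completing the proof.
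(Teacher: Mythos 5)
Your proposal is correct and follows essentially the same route as the paper's proof: a global supersolution of the form $\inf_{\xi,\epsilon}\bigl\{h(\xi)+\epsilon+B_\epsilon h_0(\xi^{-1}x)+(\mathrm{const})\,t\bigr\}$, a subsolution that equals $h$ at $t=0$, Perron's method between the two, and then Theorem~\ref{comparisongauss} applied to $u^*$ and $u_*$ to force continuity. The only substantive differences are that the paper takes the simpler time-independent subsolution $z(x,t)=h(x)$ (automatically a subsolution because the right-hand side of the PDE is nonnegative) instead of your lower envelope of decreasing barriers, that your time slope $B_\epsilon^{m_1}C$ carefully tracks the $m_1$-homogeneity of $\det_+$ under multiplication by $B_\epsilon$, and that your final $\theta\to 1$ step is precisely what part~(iii) of Theorem~\ref{comparisongauss} (whose extra hypotheses on $h_0$ are assumed in the statement) is designed to deliver, so it can be invoked directly rather than re-derived from the Ishii--Mikami argument.
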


\begin{proof}
The idea for this proof follows from \cite[Theorem 2.7]{im:level}.  First we will construct a viscosity supersolution of $\eqref{generalinitial}$.  Define
$$w(x,t) = h_0(x) + Ct \hspace{5pt} \text{ for } (x,t) \in \mathbb{G} \times [0,\infty).$$  
Then by our assumptions on $h_0$, $w$ is a viscosity supersolution.  Further, for each $\zeta \in \mathbb{G}$, $w(\zeta^{-1}x,t)$ is also a viscosity supersolution.

Let $\epsilon \in (0,1)$ and define
$$f(x,t) = \inf_{\xi \in \mathbb{G}, \epsilon \in (0,1)}\{ h(\xi) + \epsilon + B_\epsilon w(\xi^{-1}x,t)\}.$$
By construction, each $h(\xi) + \epsilon + B_\epsilon w(\xi^{-1}x,t)$ is a viscosity supersolution. By Lemma $\ref{closedinf}$, $f(x,t)$ is a viscosity supersolution.  Notice that also by construction,
$$f(x,t) \leq h(x) + \epsilon + B_\epsilon Ct$$
and
$$ h(z) - h(\xi) \leq \epsilon + B_\epsilon h_0(\xi^{-1}x) \leq \epsilon + B_\epsilon h_0(\xi^{-1}x) + Ct \hspace{5pt} \Longrightarrow \hspace{5pt} h(z) \leq f(x,t).$$
Therefore
$$h(x) \leq f(x,0) \leq h(x) + \epsilon \hspace{5pt} \forall \epsilon \in (0,1).$$ 
Thus $h(x) = f(x,0)$.  Further, 
\begin{eqnarray*}
\sup_{(x,t) \in \mathbb{G} \times [0,T]} |f(x,t) - h_0(x)| &\leq& \sup_{(x,t) \in \mathbb{G} \times [0,T]}\Big( |f(x,t) - h(x)| + |h(x) - h_0(x)|\Big) \\  &\leq& \sup_{(x,t) \in \mathbb{G} \times [0,T]}\Big( |\epsilon + B_\epsilon Ct| + |h(x) - h_0(x)|\Big) \\&<& \infty.
\end{eqnarray*}
Now to construct a viscosity subsolution we set 
$$z(x,t) = h(x) \hspace{5pt} \forall (x,t) \in \mathbb{G} \times [0,\infty).$$ Let $\varphi \in \mathcal{C}^{2,1}$ be such that $z(x,t) - \varphi(x,t)$ has a local maximum at $(\hat{x},\hat{t})$.  Notice that since $z(x,t)$ is differentiable in $t$ we must have $z_t = \varphi_t = 0$.  Therefore,
$$\varphi_t(\hat{x},\hat{t}) = 0 \leq \frac{\text{det}_+ \left((D_0^2\varphi(\hat{x},\hat{t}))^*\right)}{\left(\sqrt{1+|D_0\varphi(\hat{x},\hat{t})|^2}\right)^{m_1+1}}.$$
Thus we have that $z(x,t)$ is in fact a viscosity subsolution.  Further, $z(x,t)$ satisfies the hypotheses both parts (i) and (ii) of Theorem \ref{comparisongauss} by construction.  \\
By Theorem \ref{existence}, there exists a solution $u$ to $\eqref{Gaussplus}$ such that 
$$z(x,t) \leq u(x,t) \leq f(x,t) \hspace{5pt} \forall (x,t) \in \mathbb{G} \times (0,\infty).$$
This inequality shows that
$$u^* \leq f^* = h^* = h_* \leq u_* \hspace{5pt} \text{ on } \mathbb{G} \times \{0\}.$$
By Theorem \ref{comparisongauss}, $u^* \leq u_*$ on $\mathbb{G} \times [0,\infty)$.  Thus we have that $u \in C(\mathbb{G} \times [0,\infty))$ and $u(x,0) = h(x)$ on $\mathbb{G}$.  
\end{proof}

\begin{thm}
Let $\G$ be a Carnot group of step $r$.  Suppose $h_0 : \G \to \R$ is such that $h_0 \in C(\G)$ and 
$$h_0(x) \geq \epsilon_0|x|_g^{2r!} \hspace{3pt} \forall x \in \G.$$  If $u$ and $v$ are continuous solutions to 
\begin{equation*}
\left\{ \begin{array}{ll}
u_t = \frac{\det_+((D_0^2u)^*)}{\left(\sqrt{1+|D_0u|^2}\right)^{m_1+1}} & \text{ in } \mathbb{G} \times (0,\infty)\\
u(x,0) = h(x) & \text{ for } x \in \mathbb{G}
\end{array} \right.
\end{equation*}
such that for each $T > 0$
$$\sup_{(x,t) \in \G \times [0,T]} \left(|u(x,t) - h_0(x)| + |v(x,t) - h_0(x)|\right) < \infty$$
then $u=v$ on $\G \times (0,\infty)$.
\end{thm}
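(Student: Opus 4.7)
The plan is to apply Theorem \ref{comparisongauss}(ii) twice, once with $u$ playing the role of the subsolution and $v$ the supersolution, and then again with the roles swapped, to obtain the two-sided inequality $u \leq v$ and $v \leq u$.

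To invoke part (ii), we first verify that $F(\eta, \mathcal{X}) = -\frac{\det_+(\mathcal{X})}{(\sqrt{1+|\eta|^2})^{m_1+1}}$ satisfies the structural hypotheses of Theorem \ref{comparisongauss}: continuity is immediate from the definition of $\det_+$; degenerate ellipticity was established in Section~4 (it follows from the monotonicity of $\det_+$ in the positive-semidefinite ordering on $S^{m_1}(\R)$); and the scaling invariance, namely that $\mu u(x,\theta t)$ is a viscosity subsolution whenever $u$ is and $\theta,\mu \in (0,1)$ satisfy $\theta \mu^{-(m_1-1)} \leq 1$, was recorded in the paragraph immediately preceding this theorem.

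The remaining hypotheses are almost automatic. The growth condition $h_0(x) \geq \epsilon_0 |x|_g^{2r!}$ is part of the statement, and $h_0 \in C(\G)$ is assumed. The uniform bound
$$\sup_{(x,t) \in \G \times [0,T]} \left(|u(x,t) - h_0(x)| + |v(x,t) - h_0(x)|\right) < \infty$$
is symmetric in $u$ and $v$, so it is in force regardless of which function we designate as the subsolution. The initial-data inequality $u(x,0) \leq v(x,0)$ (respectively $v(x,0) \leq u(x,0)$) is trivially an equality, since both functions share the initial datum $h$. Finally, the extra hypothesis required for part (ii) — continuity of the subsolution in $t$ — is handed to us by the assumption that both $u$ and $v$ are continuous solutions.

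Applying Theorem \ref{comparisongauss}(ii) once therefore yields $u \leq v$ on $\G \times [0,\infty)$; applying it again with $u$ and $v$ interchanged yields $v \leq u$, and together these give $u = v$. There is no real obstacle at this stage: the entire analytic content of the uniqueness statement is packaged inside the comparison principle proved earlier, and the present proof is a matter of checking that its hypotheses hold in each of the two directions.
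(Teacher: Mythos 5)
Your proposal is correct and follows essentially the same route as the paper: the paper's proof likewise applies Theorem \ref{comparisongauss} twice, once with $u$ as subsolution and $v$ as supersolution and once with the roles reversed, concluding $u=v$ from the two inequalities. Your version merely spells out the hypothesis-checking (degenerate ellipticity, the scaling property, the symmetric growth bound, equality of initial data, and continuity in $t$ for part (ii)) that the paper leaves implicit.
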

\begin{proof}
Let $u$ and $v$ be solutions satisfying the hypothesis of the theorem.  Considering $u$ as a subsolution and $v$ as a supersolution, Theorem \ref{comparisongauss} yields $u \leq v$ on $\G \times (0,\infty)$.  Considering $v$ as a subsolution and $u$ as a supersolution, Theorem \ref{comparisongauss} yields $v \leq u$ on $\G \times (0,\infty)$.  Thus $u=v$ on $\G \times (0,\infty)$.  
\end{proof}
Note that if $h(x)$ satisfies the hypothesis of Theorem \ref{gaussflowexist} and $u$ and $v$ are constructed using the methods of Theorem \ref{gaussflowexist}, then $u$ and $v$ satisfy $$\sup_{(x,t) \in \G \times [0,T]} \left(|u(x,t) - h_0(x)| + |v(x,t) - h_0(x)|\right) < \infty$$ by construction.  This is because such solutions $u$ satisfy:
\begin{eqnarray*}
\sup_{(x,t) \in \G \times [0,T]}|u(x,t) - h_0(x)|&=&\sup_{(x,t) \in \G \times [0,T]} |u(x,t) + h(x) - h(x) - h_0(x)|\\
&\leq&\sup_{(x,t) \in \G \times [0,T]}\left( |u(x,t) - h(x)| + |h(x) - h_0(x)|\right)\\
&\leq& \sup_{(x,t) \in \G \times [0,T]}\left(|f(x,t) - h(x)| + |h(x) - h_0(x)|\right)\\
&\leq& \sup_{(x,t) \in \G \times [0,T]}\left(|f(x,t) - h_0(x)| + 2|h(x) - h_0(x)|\right)\\
&<& \infty.
\end{eqnarray*}

\subsection{Example in H-type Groups}
Recall that the above theorems and constructions rely heavily on the existence of a function $h_0: \G \to \R$ having the properties that $h_0 \in C^2(\G)$, $h_0(0) = 0$, 
$$h_0(x) \geq \epsilon_0|x|_g^{2r!} \hspace{3pt} \forall x \in \G$$
and 
$$ C \geq \frac{\text{det}_+ ((D_0^2 h_0(x))^*)}{\left(\sqrt{1 + |D_0h_0(x)|^2}\right)^{m_1+1}}$$
for some $C > 0$.  In this section we will give an explicit example of such an $h_0$ for H-type groups.

Let $\He$ be an H-type group (see Example 1.3) with Lie algebra given by $\text{\gothfamily{h}} = V^1 \oplus V^2$ such that $\{X_1, \ldots, X_{m_1}\}$ forms an orthonormal basis of $V^1$ and $\{Y_1, \ldots, Y_n\}$ forms an orthonormal basis of $V^2$.  For each $x \in \He$, let $v(x) = V^1$ and $z(x) = V^2$ such that $x = exp(v(x) + z(x))$.  
Let 
$$h_0(x)=(|v(x)|^4 + 16|z(x)|^2) \geq |x|_g^4.$$  Notice that $h_0 \in \mathcal{C}^2(\G)$ and $h_0(0)=0$ by construction.  Thus it remains to be shown that there exists $C > 0$ such that 
$$ C \geq \frac{\text{det}_+ ((D_0^2 h_0(x))^*)}{\left(\sqrt{1 + |D_0h_0(x)|^2}\right)^{m_1+1}}$$
With this in mind, we consider the following.  \\
Recall 
$$X_iu(x) = \frac{\partial}{\partial_s} u(xe^{sX_i})|_{s=0}$$
and that in a Carnot group of step two, the Baker-Campbell-Hausdorff formula yields 
$$e^Xe^Y = e^{X+Y+\frac{1}{2}[X,Y]}.$$  
Let $\varphi_j(s) = h_0(xe^{sX_j})$.  Then $X_jh(x) = \varphi'_j(0)$.  Further, by the Baker-Campbell-Hausdorff formula, $v(xe^{sX_j}) = v(x) + sX_j$ and $z(xe^{sX_j}) = z(x) + \frac{1}{2}[v(x), sX_j]$.  This yields
$$\varphi_j(s) = |v(x) + sX_j|^4 + 16|z(x) + \frac{s}{2}[v(x),X_j]|^2.$$
Using only the fact that we are in a step two group,
$$\varphi'_j(0) = 4\left(|v(x)|^2 \langle v(x), X_j\rangle + 4\langle z(x), [v(x),X_j]\rangle\right).$$
Then by the fact that the group is of H-type,
$$\varphi'_j(0) = 4\left(|v(x)|^2 \langle v(x), X_j\rangle + 4\langle J_{z(x)}v(x), X_j\rangle\right).$$
Using the properties $J_{z(x)}v(x)$,
\begin{eqnarray*}
\sum_{j=1}^{m_1} \left(\varphi_j'(0)\right)^2 &=& \sum_{j=1}^{m_1} 16\left(\langle v(x)|v(x)|^2 + 4J_{z(x)}v(x), X_j \rangle \right)^2\\
&=& 16|v(x)|^2 |h_0(x)|^4
\end{eqnarray*}
Therefore,
$$|D_0(h_0(x))|^2 = 16|v(x)|^2 |h_0(x)|^4.$$
Further,
\begin{eqnarray*}
\left(D_0^2(h_0(x))\right)^*_{ij} = 8\langle v(x), X_i\rangle \langle v(x),X_j\rangle + 4|v(x)|^2\delta_{ij} + 8 \langle [v(x),X_j], [v(x),X_i]\rangle.
\end{eqnarray*}
To bound the determinant of $\left(D_0^2(h_0(x))\right)^*$ we will need the following:
\begin{eqnarray*}
\sum_{j=1}^{m_1} |[v(x),X_j]|^2 &=& \sum_{i=1}^n \sum_{j=1}^{m_1} \langle Y_i, [v(x),X_j]\rangle ^2 \\
&=& \sum_{i=1}^n \sum_{j=1}^{m_1} \langle J_{Y_i}v(x), X_j\rangle ^2\\
&=& \sum_{i=1}^n |J_{Y_i}v(x)|^2\\
&=& \sum_{i=1}^n |Y_i|^2|v(x)|^2 \\
&=& n|v(x)|^2
\end{eqnarray*} 
Using this we get
\begin{eqnarray*}
\det \left( (D_0^2(h_0(x))^*\right) &\leq& \prod_{i=1}^{m_1} \sum_{j=1}^{m_1} \left|\left(D_0^2(h_0(x))\right)^*_{ij}\right|\\
&\leq& \prod_{i=1}^{m_1} \Bigg(\sum_{j=1}^{m_1}\Big(8|\langle v(x), X_j\rangle |^2 + 8 |\langle v(x), X_i \rangle |^2  + 8 |\langle [v(x),X_j]|^2 \\&+& 8|[v(x),X_i]|^2 + 4|v(x)|^2\Big)\Bigg)  \\
&\leq& C\prod_{i=1}^{m_1} \left(\sum_{j=1}^{m_1} \left(|\langle v(x), X_j\rangle |^2 + |[v(x), X_j]|^2 +4|v(x)|^2\right)\right) \\
&=& C\prod_{i=1}^{m_1}\left(|v(x)|^2 + m_1|v(x)|^2 + n|v(x)|^2 \right)\\
&=& C(m_1,n)|v(x)|^{2m_1}
\end{eqnarray*}
and
$$|D_0(h_0(x))|^2 = 16|v(x)|^2|h_0(x)|^2 \geq 16|v(x)|^6.$$
Therefore
\begin{eqnarray*}
\frac{\text{det}_+\left((D_0^2(h_0(x)))^*\right)}{\left(\sqrt{1+|D_0(h_0(x))|^2}\right)^{m_1+1}} &\leq& \frac{\text{det}_+\left((D_0^2(h_0(x)))^*\right)}{\left(\sqrt{1+16|v(x)|^6}\right)^{m_1+1}} \\
&\leq& C(m_1,n) \frac{|v(x)|^{2m_1}}{\left(\sqrt{1+16|v(x)|^6}\right)^{m_1+1}} \\
&\leq& \tilde{C}(m_1,n)
\end{eqnarray*} 

Thus for H-type groups, we have the following theorem concerning the existence of continuous solutions to the horizontal Gauss curvature flow equation.
\begin{thm}
Let $\He$ be an H-type group with Lie algebra given by $\text{\gothfamily{h}} = V^1 \oplus V^2$.  For each $x \in \He$, let $v(x) = V^1$ and $z(x) = V^2$ such that $x = exp(v(x) + z(x))$.  Define $h_0(x)=(|v(x)|^4 + 16|z(x)|^2) \geq |x|_g^4.$  Let $h \in C(\G)$ be such that 
$$\sup_{\G} |h(x) - h_0(x)| < \infty$$ 
and for each $\epsilon \in (0,1)$ there exists a constant $B_\epsilon > 0$ such that 
$$|h(x) - h(\xi)| \leq \epsilon + B_\epsilon h_0(\xi^{-1}x).$$
Then there is a viscosity solution $u \in C(\G \times [0,\infty))$ of 
\begin{equation}
\left\{ \begin{array}{ll}
u_t = \frac{\det_+((D_0^2u)^*)}{\left(\sqrt{1+|D_0u|^2}\right)^{m_1+1}} & \text{ in } \mathbb{G} \times (0,\infty)\\
u(x,0) = h(x) & \text{ for } x \in \mathbb{G}
\end{array} \right.
\end{equation}
\end{thm}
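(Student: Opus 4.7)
The plan is to reduce this theorem directly to Theorem \ref{gaussflowexist}, since the $h_0$ introduced in this subsection is explicitly tailored to satisfy all the structural hypotheses required by that general existence result. The only task is to verify that $h_0(x) = |v(x)|^4 + 16|z(x)|^2$ satisfies the conditions on $h_0$ appearing in Theorem \ref{gaussflowexist}: namely, $h_0 \in C^2(\G)$, $h_0(0) = 0$, the lower bound $h_0(x) \geq \epsilon_0 |x|_g^{2r!}$, and the curvature bound $\det_+((D_0^2 h_0)^*)/(\sqrt{1+|D_0 h_0|^2})^{m_1+1} \leq C$.

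First I would verify the coercivity bound. Since an H-type group is of step $r = 2$, we have $2r! = 4$. Unpacking the gauge norm formula
\[
|x|_g = \left(\sum_{j=1}^{2}\left(\sum_{i=1}^{m_j}|x_{i,j}|^2\right)^{r!/j}\right)^{1/(2r!)}
\]
with exponents $r!/1 = 2$ and $r!/2 = 1$ gives $|x|_g^4 = |v(x)|^4 + |z(x)|^2$, so $h_0(x) \geq |x|_g^4$ with $\epsilon_0 = 1$. The smoothness of $h_0$ and the identity $h_0(0) = 0$ are immediate. The bound on the Gauss curvature expression of $h_0$ is precisely what has just been computed in this subsection: the preceding series of Baker--Campbell--Hausdorff calculations, the identity $|D_0 h_0(x)|^2 = 16|v(x)|^2 |h_0(x)|^2$, the explicit formula for $(D_0^2 h_0)^*_{ij}$, and the H-type identity $\sum_{j=1}^{m_1}|[v(x), X_j]|^2 = n|v(x)|^2$ together yield
\[
\frac{\det_+((D_0^2 h_0(x))^*)}{\left(\sqrt{1+|D_0 h_0(x)|^2}\right)^{m_1+1}} \leq \tilde{C}(m_1, n).
\]

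With these four conditions verified, $h_0$ satisfies the hypotheses of Theorem \ref{gaussflowexist}. Since the hypotheses on $h$ stated in the present theorem ($\sup_\G |h(x) - h_0(x)| < \infty$ and the modulus-of-continuity condition $|h(x) - h(\xi)| \leq \epsilon + B_\epsilon h_0(\xi^{-1}x)$) are exactly those required there, Theorem \ref{gaussflowexist} applies and produces a viscosity solution $u \in C(\G \times [0,\infty))$ of the horizontal Gauss curvature flow equation with initial datum $h$, completing the proof.

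There is no serious analytic obstacle remaining; the real work was already carried out in the two preceding steps of the exposition, namely the general existence result (Theorem \ref{gaussflowexist}), whose proof combines Perron's method with the unbounded-domain comparison principle (Theorem \ref{comparisongauss}), and the explicit curvature computation for $h_0$ on H-type groups. The present theorem is essentially the assembly: recognize that on any H-type group one can produce a concrete admissible $h_0$, so the abstract existence theorem specializes to a self-contained statement in this setting.
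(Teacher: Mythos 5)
Your proposal is correct and follows the same route as the paper: the theorem is obtained by verifying that the explicit $h_0(x)=|v(x)|^4+16|z(x)|^2$ satisfies the hypotheses of the general existence theorem (smoothness, vanishing at the origin, the coercivity bound $h_0\geq |x|_g^{2r!}$ with $2r!=4$ in step two, and the curvature bound established by the Baker--Campbell--Hausdorff computation), and then invoking that theorem. This is precisely how the paper assembles the result, so there is nothing to add.
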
 

\bibliographystyle{amsplain}
\bibliography{Thesis}

\end{document}